\setlist[enumerate]{leftmargin=1.5em}
\setlist[itemize]{leftmargin=1.5em}
\providecommand{\MR}{\relax\ifhmode\unskip\space\fi MR }
\providecommand{\href}[2]{#2}
\definecolor{purple}{rgb}{0.5, 0, 1}
\definecolor{orange}{rgb}{1,.5,0}
\providecommand{\MR}{\relax\ifhmode\unskip\space\fi MR }
\providecommand{\href}[2]{#2}
\definecolor{green}{rgb}{0,0.8,0} 
\newtheorem{maintheorem}{Theorem}
\newtheorem{theorem}{Theorem}[section]
\newtheorem{lemma}[theorem]{Lemma}
\theoremstyle{definition}
\theoremstyle{remark}
\newtheorem{remark}[theorem]{Remark}
\numberwithin{equation}{section}
\newcommand{\nrm}[1]{\Vert#1\Vert}
\newcommand{\nnrm}[1]{{\vert\kern-0.25ex\vert\kern-0.25ex\vert #1 
    \vert\kern-0.25ex\vert\kern-0.25ex\vert}}
\newcommand{\supp}{{\mathrm{supp}}\,}
\newcommand{\lap}{\Delta}
\newcommand{\ud}{\mathrm{d}}
\newcommand{\rd}{\partial}
\newcommand{\nb}{\nabla}
\newcommand{\alp}{\alpha}
\newcommand{\bt}{\beta}
\newcommand{\gmm}{\gamma}
\newcommand{\dlt}{\delta}
\newcommand{\tht}{\theta}
\newcommand{\bbN}{\mathbb N}
\newcommand{\bbR}{\mathbb R}
\begin{document}

\title{On well-posedness of $\alpha$-SQG equations in the half-plane}
\author{In-Jee Jeong\thanks{Department of Mathematics and RIM, Seoul National University. E-mail: injee\_j@snu.ac.kr} \and Junha Kim\thanks{School of Mathematics, Korea Institute for Advanced Study. E-mail: junha02@kias.re.kr} \and Yao Yao\thanks{Department of Mathematics, National University of Singapore. E-mail: yaoyao@nus.edu.sg}} 
\date{\today}

\renewcommand{\thefootnote}{\fnsymbol{footnote}}
\footnotetext{\emph{Key words: ill-posedness, norm inflation, surface quasi-geostrophic equation, fluid dynamics}  \\
	\emph{2020 AMS Mathematics Subject Classification:} 76B47, 35Q35 }

\renewcommand{\thefootnote}{\arabic{footnote}}


\maketitle


\begin{abstract}
    We investigate the well-posedness of $\alpha$-SQG equations in the half-plane, where $\alpha=0$ and $\alpha=1$ correspond to the 2D Euler and SQG equations respectively. For $0<\alpha \le 1/2$, we prove local well-posedness in certain weighted anisotropic H\"older spaces. We also show that such a well-posedness result is sharp: for any $0<\alpha \le 1$, we prove nonexistence of H\"older regular solutions (with the H\"older regularity depending on $\alpha$) for initial data smooth up to the boundary. 
\end{abstract}


\section{Introduction}

\subsection{Generalized SQG equations}

In this paper, we are concerned with the Cauchy problem for the inviscid $\alpha$-surface quasi-geostrophic  ($\alpha$-SQG) equations on the right half plane $\bbR^2_+ = \{ (x_1,x_2) \in \bbR^2 ; x_1 > 0 \}$
\begin{equation}  \label{eq:SQG} \tag{$\alpha$-SQG}
\left\{
\begin{aligned} 
&\rd_t\tht + u\cdot \nb \tht = 0, \\
&u=  - \nb^\perp (-\lap)^{-1 + \frac{\alp}{2}} \tht,
\end{aligned}
\right.
\end{equation}   for $0\le \alp \le 1$. For notational simplicity, throughout this paper we shall normalize the constant in a way that the Biot--Savart law becomes \begin{equation}\label{eq:Biot-Savart-aSQG}
	\begin{split}
		u(t,x) = \int_{ \bbR^2_+ } \left[ \frac{(x-y)^\perp}{|x-y|^{2+\alp}} - \frac{(x-\tilde{y})^\perp}{|x-\tilde{y}|^{2+\alp}}   \right] \tht(t,y) \, \ud y,
	\end{split}
\end{equation} where $\tilde{y}:=(-y_1,y_2)$ for $y=(y_1,y_2)$.

In the last decade, the $\alpha$-SQG equations (either with or without dissipation) in domains with boundaries have attracted a lot of attention. The constructed solutions were either $L^p$ weak solutions, patch solutions, or solutions that vanish on the boundary. Below we summarize the previous literature on well-posedness of solutions.

\begin{itemize}
    \item Weak solutions: For the generalized SQG (or SQG) equation, when the equation is set up in a domain with a boundary, global existence of weak solution in  $L^\infty_tL^2_x$ was established in \cite{CNgu, Ngu, constantin2018inviscid}. We refer to \cite{Res, Mar, BaeGra} for the existence of weak solutions in $\bbR^2$ and \cite{BSV, IM,CKL} for the non-uniqueness of the weak solutions. Existence results of weak solutions in $\bbR^2$ can be directly applied to give weak solutions in $\bbR^2_+$ by considering solutions which are odd in one variable. 
    \item Patch solutions: For the 2D Euler equation on the half plane, global existence of $C^{1,\gamma}$ patch solutions was shown in \cite{Dep, D03, KRYZ}. For the $\alpha$-SQG equations on the half plane, local well-posedness of $H^3$-patch solutions for $\alpha \in (0, \frac{1}{12})$ was established in \cite{KYZ}, and it was shown in \cite{KRYZ} that such patch solution can form a finite time singularity for this range of $\alpha$. Later, \cite{GaPa} extended the local well-posedness and finite-time singularity results to $\alpha\in(0,\frac{1}{6})$ for $H^2$-patch solutions. The creation of a splash-like singularity was ruled out in the works \cite{JZ-axi,KL23}. We also refer to \cite{CGI} for stability of the half-plane patch stationary solution.
\end{itemize}
 
To the best of our knowledge, in the literature there has been no local well-posedness results for strong solutions of \eqref{eq:SQG} which \textit{do not} vanish on the boundary. The difficulty is mainly caused by the fact that when $\theta$ do not vanish on the boundary, even if it is smooth, the regularity of the velocity field $u$ becomes worse as we get closer to the boundary, and $u$ is only $C^{1-\alpha}$ near the boundary. A closer look reveals that the two components of $u$ have different regularity properties. This motivates us to introduce an anisotropic functional space, which turns out to be crucial for us to establish the local-wellposedness result.

\medskip

While we were finalizing the paper,  a preprint by Zlato\v{s} \cite{Zlatos} appeared where the well-posedness problem is studied in a very similar setting, and the same type of anisotropic weighted space was discovered independently. A detailed discussion can be found in Remark~\ref{rmk_singularity} and the end of Section~\ref{sec_results}.

\subsection{Main Results}\label{sec_results}
For any $0<\bt\le 1$, we introduce the space $X^{\bt} = X^{\bt}(\overline{\bbR^2_+})$, which is a subspace of $C^{\bt}(\overline{\bbR^2_+})$ with anisotropic Lipschitz regularity in space: we say $f \in X^{\bt}$ if it belongs to $C^{\bt}$, differentiable almost everywhere, and satisfies \begin{equation}\label{eq:space}
	\begin{split}
		\nrm{f}_{X^{\bt}} := \nrm{f}_{C^{\bt}} + \nrm{x_1^{1-\bt} \rd_1 f}_{L^\infty} + \nrm{ \rd_2 f }_{L^\infty} < \infty. 
	\end{split}
\end{equation} The H\"older norm is defined by \begin{equation}\label{eq:space2}
	\begin{split}
		\nrm{f}_{C^{\bt}} := \nrm{f}_{L^\infty} + \sup_{x\ne x'} \frac{|f(x)-f(x')|}{|x-x'|^{\bt}}. 
	\end{split}
\end{equation}  For simplicity, we only deal with the functions $f$ that are compactly supported in a ball of radius $O(1)$, so that we have $X^{\bt}\subset X^{\gmm}$ if $\bt\ge\gmm$. In general, one may replace the weight $x_1^{1-\bt}$ by something like $(x_1/(1+x_1))^{1-\bt}$. 

It turns out that, in the regime  $\alpha \in (0,\frac {1}{2}]$, the $\alp$-SQG equation is locally well-posed in $X^{\bt}_c$. In the following theorem we prove the local well-posedness result, and also establish a blow-up criterion: if the solution in $X^{\bt}_c$ cannot be continued past some time $T<\infty$, then $\| \partial_2 \tht(t,\cdot) \|_{L^{\infty}}$ must blow up as $t\to T$.
\begin{maintheorem}\label{thm_loc1}
	Let $\alpha \in (0,\frac {1}{2}]$ and $\beta \in [\alpha,1-\alpha]$. Then \eqref{eq:SQG} is locally well-posed in $X^{\bt}_c$: for any $\tht_{0} \in X^{\bt}_c$, there exist $T = T(  \nrm{\tht_0}_{X^{\alpha}}, |\supp \tht_0| )>0$ and a unique solution $\tht$ to \eqref{eq:SQG} in the class $\mathrm{Lip}([0,T);L^{\infty}) \cap L^\infty([0,T); X^{\bt}_c) \cap C([0,T); C^{\bt'})$ for any $0<\bt'<\bt$. Furthermore, if $T<\infty$, then $\lim_{t\to T} \| \partial_2 \tht(t,\cdot) \|_{L^{\infty}} \gtrsim (T-t)^{-\eta}$ holds for any $\eta \in (0,1)$.
\end{maintheorem}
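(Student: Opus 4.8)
The plan is to prove Theorem~\ref{thm_loc1} by the standard scheme for transport-type equations: establish \emph{a priori} estimates in $X^\bt_c$ along the flow, construct solutions by an iteration/regularization argument, prove uniqueness by an energy estimate in a weaker norm, and finally extract the blow-up criterion from the structure of the \emph{a priori} estimate. The whole argument rests on a \emph{linear estimate} for the Biot--Savart operator \eqref{eq:Biot-Savart-aSQG}, namely that $\theta \mapsto u$ maps $X^\bt_c$ boundedly into a space of vector fields whose flow preserves $X^\bt$; concretely, one needs (i) $u \in C^{1-\alpha}$ up to the boundary with $u_1$ vanishing on $\{x_1=0\}$ at rate $x_1^{1-\alpha}$, and more precisely $\|x_1^{1-\bt}\partial_1 u_1\|_{L^\infty} + \|\partial_2 u_1\|_{L^\infty} + \|x_1^{1-\bt}\partial_1 u_2\|_{L^\infty} + \|\partial_2 u_2\|_{L^\infty} \lesssim \|\theta\|_{X^\bt}$ (this is where the anisotropy and the constraint $\beta \in [\alpha, 1-\alpha]$ enter, and why $\alpha \le 1/2$ is needed so that the interval is nonempty), and (ii) a log-Lipschitz-in-$x_2$ / Hölder-in-$x_1$ type bound sufficient to run the flow map. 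I would state and prove this linear estimate first as a separate lemma, splitting the kernel integral in \eqref{eq:Biot-Savart-aSQG} into near-diagonal, far-field, and near-boundary (image-point) contributions, using the oddness cancellation between $y$ and $\tilde y$ to gain the extra boundary decay in the $u_1$ component.

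With the linear estimate in hand, the \emph{a priori} estimate is the heart of the matter. Let $\Phi_t$ be the flow of $u$; since $\theta$ is transported, $\|\theta(t)\|_{L^\infty} = \|\theta_0\|_{L^\infty}$ and $|\supp\theta(t)|$ is constant, so the kernel constants stay controlled. For the anisotropic derivatives I would differentiate the transport equation: $\partial_t \partial_i\theta + u\cdot\nb\partial_i\theta = -(\partial_i u)\cdot\nb\theta$, and estimate $\frac{d}{dt}\|\partial_2\theta\|_{L^\infty}$ and $\frac{d}{dt}\|x_1^{1-\bt}\partial_1\theta\|_{L^\infty}$ (the latter requires also controlling the transport of the weight $x_1$ along the flow, which is where $u_1 \sim x_1^{1-\alpha}$ near the boundary is used, together with $\beta \ge \alpha$, to show $x_1\circ\Phi_t$ stays comparable to $x_1$). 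The crucial point is that the right-hand side $(\partial_i u)\cdot\nb\theta = (\partial_i u_1)\partial_1\theta + (\partial_i u_2)\partial_2\theta$ is controlled by $\|\theta\|_{X^\bt}^2$ using the linear estimate and the weight bookkeeping: e.g. $(\partial_1 u_1)\partial_1\theta$ carries a weight $x_1^{-(1-\bt)}\cdot x_1^{-(1-\bt)}$ which, after multiplying by the target weight $x_1^{1-\bt}$, leaves $x_1^{-(1-\bt)}$ — so one actually needs the improved bound $\partial_1 u_1 \in L^\infty$ (no weight) rather than only $x_1^{1-\bt}\partial_1 u_1 \in L^\infty$; establishing that $\partial_1 u_1$ is genuinely bounded (not just weighted-bounded) is a subtle consequence of the kernel structure and the oddness, and I expect this to be the main technical obstacle. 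Granting it, one obtains a differential inequality $\frac{d}{dt}\|\theta(t)\|_{X^\bt} \lesssim C(\|\theta_0\|_{L^\infty},|\supp\theta_0|)\,\|\theta(t)\|_{X^\bt}\,(1+\log^+\|\theta(t)\|_{X^\bt})$ or simply $\lesssim \|\theta(t)\|_{X^\bt}\|\partial_2\theta(t)\|_{L^\infty}$, which yields local existence of the bound on $[0,T)$ with $T$ depending only on $\|\theta_0\|_{X^\alpha}$ and $|\supp\theta_0|$ (using $X^\bt \subset X^\alpha$).

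For the construction, I would regularize: mollify the initial data and the kernel (or solve on a domain bounded away from the boundary), obtain smooth approximate solutions $\theta^{(\eps)}$ satisfying the same \emph{a priori} bounds uniformly in $\eps$, and pass to the limit using Aubin--Lions / Arzelà--Ascoli — the uniform $X^\bt$ bound plus the equation give equicontinuity in time in $C^{\bt'}$ for $\bt'<\bt$, hence a limit in $\mathrm{Lip}([0,T);L^\infty)\cap L^\infty([0,T);X^\bt_c)\cap C([0,T);C^{\bt'})$ solving \eqref{eq:SQG}. Uniqueness I would prove at the level of the flow maps or via an $L^2$ (or $L^\infty$) estimate on the difference of two solutions, using that $u$ is log-Lipschitz-type so that Yudovich-style Gr\"onwall with a $\log$ modulus closes — the anisotropy must be respected here, but the key input is again the linear estimate giving $u$ continuous with a quantitative modulus. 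Finally, the blow-up criterion: from the differential inequality $\frac{d}{dt}\|\theta\|_{X^\bt} \lesssim \|\theta\|_{X^\bt}\,\|\partial_2\theta\|_{L^\infty}$ we get $\|\theta(t)\|_{X^\bt} \le \|\theta_0\|_{X^\bt}\exp(C\int_0^t\|\partial_2\theta(s)\|_{L^\infty}\,ds)$, so if $T<\infty$ is maximal then $\int_0^T\|\partial_2\theta\|_{L^\infty}=\infty$; combining this with the crude bound $\|\partial_2\theta(t)\|_{L^\infty}\le\|\theta(t)\|_{X^\bt}\lesssim\exp(C\int_0^t\|\partial_2\theta\|_{L^\infty})$ and a bootstrap shows that the integral can diverge only if $\|\partial_2\theta(t)\|_{L^\infty}\gtrsim (T-t)^{-\eta}$ for every $\eta\in(0,1)$, which is the stated rate.
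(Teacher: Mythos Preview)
Your overall strategy matches the paper's: a linear velocity estimate, a priori $X^\bt$ bounds by differentiating the transport equation, existence by approximation, uniqueness by a difference estimate, and the blow-up criterion extracted from the differential inequality. Two points need sharpening.

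First, the linear estimate as you state it is not quite enough. In the $\rd_2\tht$ equation the right-hand side contains $\rd_2 u_1\cdot\rd_1\tht$, and since $\rd_1\tht$ is only weighted-bounded you need $|\rd_2 u_1|\lesssim x_1^{1-\alp}$, not just $\rd_2 u_1\in L^\infty$. The paper gets this by proving the stronger fact $u_1\in C^{1,1-\alp}(\overline{\bbR^2_+})$ (so $\rd_2 u_1\in C^{1-\alp}$ and vanishes on the boundary). For $u_2$ the paper does not simply bound $x_1^{1-\bt}\rd_1 u_2$: it \emph{extracts} the singular part explicitly, writing $u_2=(u_2-U_2)+U_2$ with $U_2(x)=-\tfrac{2}{\alp}\int_{\bbR}|x-(0,y_2)|^{-\alp}\tht(0,y_2)\,\ud y_2$, and shows $\rd_1(u_2-U_2)\in L^\infty$ while $\rd_1 U_2(x)=C_\alp x_1^{-\alp}\tht(0,x_2)+O(x_1^{1-\alp})$. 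This decomposition is what makes the constraint $\bt\le 1-\alp$ transparent (it is exactly what is needed for $x_1^{1-\bt}\rd_1 U_2$ to be bounded) and is the main structural insight behind the linear lemma.

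Second, your uniqueness argument has a genuine gap. A Yudovich/Osgood argument requires a log-Lipschitz modulus for $u$, but $u_2$ is only $C^{1-\alp}$ in $x_1$ near the boundary ($\rd_1 u_2\sim x_1^{-\alp}$), so this does not close. The paper instead runs an $L^2$ energy estimate on $\tht-\tht'$ and relies on a \emph{weighted} $L^2$ velocity bound (Lemma~\ref{lem:L2}): if $v=-\nb^\perp(-\lap)^{-1+\alp/2}g$ then $\nrm{x_1^{-(1-\alp)}v_1}_{L^2}\lesssim\nrm{g}_{L^2}$. This gain in the weight exactly compensates the unbounded $\rd_1\tht$ in the term $(u_1-u_1')\rd_1\tht$, and is the key new ingredient for uniqueness. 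The same weighted $L^2$ estimate is used to show the Picard iterates $\tht^{(n)}$ (which the paper uses for existence, in place of your regularization scheme) are Cauchy in $L^2$. Your blow-up argument is correct and essentially the paper's.
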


\begin{remark}(Finite-time singularity formation in $X^\beta_c$). \label{rmk_singularity}
	For $\alpha\in(0,\frac{1}{3})$, one can follow the same argument in \cite{GaPa, KRYZ} to construct a $\theta_0\in  C^{\infty}_c(\overline{\bbR^2_+})$ where the solution leaves $X^{\bt}_c$ in finite time. To show this, let us start with the initial data $1_{D_0}$ in \cite{GaPa, KRYZ}, where $D_0$ consists of two disjoint patches symmetric about the $x_1$ axis. (In our setting they are symmetric about the $x_2$ axis, since our domain is the right half plane.) We then set our initial data $\theta_0\in  C^{\infty}_c(\overline{\bbR^2_+})$ such that $\theta_0 \equiv 1$ in $D_0$, $\theta_0 \geq 0$ in $\overline{\bbR^2_+}$, and $\theta_0 \equiv 0$ on the $x_1$-axis. Assuming a global-in-time solution in $X^\beta$, on the one hand we have $\theta(t,\cdot) \equiv 0$ on the $x_1$-axis for all time by symmetry. On the other hand, one can check that all the estimates in \cite{GaPa, KRYZ} on $u$ still hold, thus the set $\{\theta(t,\cdot)=1\}$ would touch the origin in finite time, leading to a contradiction.  
 
For $\alpha>\frac13$, significantly new ideas are needed to prove the singularity formation. In the very recent preprint by Zlato\v{s} \cite{Zlatos}, he used some more precise and delicate estimates to improve the parameter regime to cover the whole range $\alpha\in(0,\frac12]$ where the equation is locally well-posed.

Finally, we point out that the blow-up criteria in Theorem~\ref{thm_loc1} implies that at the blow-up time $T$, we must have $\lim_{t\to T} \| \partial_2 \tht(t,\cdot) \|_{L^{\infty}} = \infty$. It would be interesting to figure out what is the exact regularity of the solution at the blow-up time.  
\end{remark} 

As a consequence of Theorem \ref{thm_loc1}, if the initial data is smooth and compactly supported in $\bbR^2_+$, there is a unique local solution in $L^\infty([0,T); X^{1-\alpha}_c)$. Our second main result shows that this regularity is sharp, even for $C^\infty_c$-data. In fact, for \emph{all} $C^\infty_c$ initial data that do not vanish on the boundary, we show that the $C^\bt$-regularity of the solution is \textit{instantaneously lost} for all $\bt>1-\alp$.   
\begin{maintheorem}\label{thm:nonexist1}
	Let $\alpha \in (0,\frac {1}{2}]$ and assume $\tht_0 \in C^{\infty}_c(\overline{\bbR^2_+})$ does not vanish on the boundary. Then, the local-in-time solution $\tht$ to \eqref{eq:SQG} given by Theorem~\ref{thm_loc1} does not belong to $L^\infty([0,\delta];C^{\beta}(\overline{\bbR^2_{+}}))$ for any $\beta > 1-\alpha$ and $\delta>0$. 
\end{maintheorem}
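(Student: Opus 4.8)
The strategy is to prove that for every $t\in(0,\delta]$ the solution $\theta(t,\cdot)$ already obeys a sharp lower bound
\[
\big|\theta(t,x_1,x_2^{\ast})-\theta(t,0,x_2^{\ast})\big|\ \gtrsim\ x_1^{1-\alpha}\qquad(0<x_1\ll1)
\]
at a well-chosen abscissa $x_2^{\ast}=x_2^{\ast}(t)$, the implicit constant being positive and depending only on $t$ and $\theta_0$. Inserting $x=(x_1,x_2^{\ast})$ and $x'=(0,x_2^{\ast})$ into \eqref{eq:space2} gives $\|\theta(t,\cdot)\|_{C^{\beta}}\gtrsim x_1^{1-\alpha-\beta}\to\infty$ as $x_1\to0^{+}$ whenever $\beta>1-\alpha$, hence $\theta\notin L^{\infty}([0,\delta];C^{\beta}(\overline{\bbR^2_+}))$. (Since $\theta_0\in C^{\infty}_c(\overline{\bbR^2_+})\subset X^{1-\alpha}_c$, Theorem~\ref{thm_loc1} with $\beta=1-\alpha$ applies and the solution lies in $L^{\infty}([0,T);X^{1-\alpha}_c)$; in particular the boundary trace $\theta(t,0,\cdot)$ is meaningful.) Two ingredients are needed: the precise behaviour of $u$ near $\{x_1=0\}$, and the way the characteristic flow propagates it from smooth data.

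\emph{Step 1: boundary asymptotics of $u$.} I claim there is a universal constant $\Lambda_{\alpha}\ne0$ such that, uniformly for $(t,x_2)$ in compact subsets of $[0,T)\times\bbR$,
\[
u_2(t,x_1,x_2)=u_2(t,0,x_2)+\Lambda_{\alpha}\,\theta(t,0,x_2)\,x_1^{1-\alpha}+o\big(x_1^{1-\alpha}\big),\qquad x_1\to0^{+},
\]
while $u_1(t,0,x_2)=0$, $u_1(t,x_1,x_2)=O(x_1)$, and $\partial_1 u_1$ is globally bounded (indeed $\partial_1 u_1=-\partial_2 u_2$, and on $X^{1-\alpha}_c$ the quantity $\partial_2 u_2$ is controlled by $\|\partial_2\theta\|_{L^{\infty}}<\infty$ near the boundary, while in the interior $\nabla u$ is bounded because $\theta(t,\cdot)\in C^{1-\alpha}$ and $\nabla u\sim|\nabla|^{\alpha}\theta$). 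The cleanest derivation extends $\theta(t,\cdot)$ oddly across $\{x_1=0\}$: the extension jumps by $2\theta(t,0,x_2)$ there, so it differs from $\theta(t,0,x_2)\,\mathrm{sgn}(x_1)$ by a function that is continuous and $O(|x_1|^{1-\alpha})$ near the line; applying $\nb^{\perp}(-\lap)^{-1+\alpha/2}$ and reading off the second component, the $\mathrm{sgn}(x_1)$ piece produces exactly $\Lambda_{\alpha}\,\mathrm{sgn}(x_1)|x_1|^{1-\alpha}$ (a one-dimensional Fourier computation gives $\Lambda_{\alpha}\ne0$), while the continuous remainder, gaining $2-\alpha$ derivatives, contributes only a $C^{2-2\alpha}$ error whose boundary deviation is $O(x_1^{2-2\alpha})=o(x_1^{1-\alpha})$. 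This expansion, with a quantitative remainder, is essentially the velocity estimate underlying the proof of Theorem~\ref{thm_loc1}.

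\emph{Step 2: propagation along characteristics and choice of point.} Let $\Phi_t$ be the flow map of $u$, so $\theta(t,\cdot)=\theta_0\circ\Phi_t^{-1}$; since $u_1$ vanishes on $\{x_1=0\}$, the boundary is invariant and $\theta(t,0,\gamma(t,b))=\theta_0(0,b)$, where $\partial_t\gamma=u_2(t,0,\gamma)$, $\gamma(0,b)=b$. Fix $b$ with $\theta_0(0,b)\ne0$ (possible since $\theta_0$ does not vanish on the boundary). For the trajectory issued from $(\veps,b)$, the bound on $\partial_1 u_1$ gives $\Phi_t^{(1)}(\veps,b)=\veps\,\rho(t)+o(\veps)$ with $\rho(t)>0$; writing $\Phi_t^{(2)}=\gamma(t,b)+\varphi(t)$ and substituting Step 1 into $\dot\varphi=\partial_2 u_2(t,0,\gamma)\,\varphi+\Lambda_{\alpha}\theta_0(0,b)\big(\Phi_t^{(1)}\big)^{1-\alpha}+o\big((\Phi_t^{(1)})^{1-\alpha}\big)$, Duhamel's formula yields $\Phi_t^{(2)}=\gamma(t,b)+\mathcal L(t,b)\,\veps^{1-\alpha}+o(\veps^{1-\alpha})$ with $\mathcal L(t,b)=\Lambda_{\alpha}\theta_0(0,b)\int_0^t E(t,s)\rho(s)^{1-\alpha}\,ds$, $E(t,s)>0$; in particular $\mathcal L(t,b)\ne0$ for every $t\in(0,\delta]$. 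Inverting $\Phi_t$ near $(0,\gamma(t,b))$ gives $[\Phi_t^{-1}]^{(1)}(x_1,x_2)=O(x_1)$ and $[\Phi_t^{-1}]^{(2)}(x_1,x_2)=\sigma(t,x_2)+\kappa(t,x_2)\,x_1^{1-\alpha}+o(x_1^{1-\alpha})$, where $\sigma(t,\cdot):=\gamma(t,\cdot)^{-1}$ and $\kappa(t,\gamma(t,b))\ne0$ precisely because $\theta_0(0,b)\ne0$. Since $\theta_0$ is smooth and $1-\alpha<1$, Taylor-expanding $\theta(t,x)=\theta_0(\Phi_t^{-1}(x))$ about $(0,\sigma(t,x_2))$ and using $\theta(t,0,x_2)=\theta_0(0,\sigma(t,x_2))$ gives
\[
\theta(t,x_1,x_2)-\theta(t,0,x_2)=\partial_2\theta_0\big(0,\sigma(t,x_2)\big)\,\kappa(t,x_2)\,x_1^{1-\alpha}+o\big(x_1^{1-\alpha}\big).
\]
Finally, pick $b_{\ast}$ with $\theta_0(0,b_{\ast})\,\partial_2\theta_0(0,b_{\ast})\ne0$: this is possible because $b\mapsto\theta_0(0,b)$ lies in $C^{\infty}_c(\bbR)$ and is not identically zero, so $b\mapsto\theta_0(0,b)^2$ is a nonzero, nonnegative, compactly supported smooth function whose derivative cannot vanish identically. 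For this $b_{\ast}$, any $t\in(0,\delta]$, and $x_2^{\ast}:=\gamma(t,b_{\ast})$, the leading coefficient above is $\partial_2\theta_0(0,b_{\ast})\,\kappa(t,x_2^{\ast})\ne0$, yielding the desired lower bound and completing the proof.

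\emph{Main obstacle.} The crux is Step 1: rigorously isolating the $\Lambda_{\alpha}\theta(t,0,x_2)\,x_1^{1-\alpha}$ term of $u_2$ with a remainder that is genuinely $o(x_1^{1-\alpha})$ uniformly up to the boundary and in $t$, and verifying $\Lambda_{\alpha}\ne0$. One must also check that the boundary expansions of $\Phi_t^{\pm1}$ hold with remainders that are truly $o(x_1^{1-\alpha})$ uniformly as $x_1\to0^{+}$; here the anisotropic structure of $X^{\beta}$ (cf.\ \eqref{eq:space}), in particular the Osgood-type control of $\partial_1$ and the Lipschitz control of $\partial_2$, together with the constraint $\beta\in[\alpha,1-\alpha]$, enter through differentiating the characteristic ODE in the initial normal coordinate and propagating the velocity asymptotics of Step 1.
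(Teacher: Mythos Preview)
Your approach is correct in outline and proves the theorem, but it is genuinely different from the paper's argument. Both proofs rest on the same velocity asymptotic---essentially your Step~1, which in the paper is the content of Lemma~\ref{lem:vel} (the statement $|\partial_1 U_2(x)-C_\alpha x_1^{-\alpha}\theta(0,x_2)|\le Cx_1^{1-\alpha}\|\partial_2\theta\|_{L^\infty}$, \eqref{eq:U2-est-unbounded}, integrated in $x_1$ gives exactly your expansion with $\Lambda_\alpha=C_\alpha/(1-\alpha)>0$). The divergence is in how this is exploited.

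The paper does \emph{not} derive an asymptotic expansion of $\theta(t,\cdot)$ near the boundary. Instead it runs a two-point comparison argument: fix $x_0=(0,a)$ with $\theta_0(x_0)\ne0$, $\partial_2\theta_0(x_0)\ne0$, and for a parameter $\ell\gg1$ take $x=(\ell^{-1},a-\ell^{-(1-\gamma)})$ with $\gamma\in(1-\beta,\alpha)$. Using the velocity asymptotic only as an \emph{inequality}, one shows $\frac{d}{dt}(\Phi_2(t,x_0)-\Phi_2(t,x))\le-\tfrac{\varepsilon}{2}\ell^{-(1-\alpha)}$, so by a time $t^*\le t_\ell=O(\ell^{-(\alpha-\gamma)})$ the two trajectories share the same $x_2$-coordinate while $\Phi_1(t^*,x)\sim\ell^{-1}$ by \eqref{eq:Phi-1-est}. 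The H\"older quotient between $\Phi(t^*,x_0)$ and $\Phi(t^*,x)$ is then $\sim(\partial_2\theta_0(x_0))\,\ell^{\beta\gamma-(1-\beta)(1-\gamma)}\to\infty$. This needs only crude ODE bounds and no inversion of the flow.

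Your route---expand $\Phi_t$ and $\Phi_t^{-1}$ to order $x_1^{1-\alpha}$ and read off a nonvanishing coefficient in $\theta(t,x_1,x_2^*)-\theta(t,0,x_2^*)$---is more analytic and in fact yields the stronger pointwise conclusion that $\theta(t,\cdot)\notin C^\beta$ for \emph{every} $t\in(0,\delta]$, whereas the paper's argument produces a sequence of times $t^*(\ell)\to0$ and only directly gives $\sup_{t\in(0,\delta]}\|\theta(t)\|_{C^\beta}=\infty$. The trade-off is that you must track $o(x_1^{1-\alpha})$ remainders through the ODE and the inversion, which is where the work lies (and which you correctly flag as the main obstacle). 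The paper's inequality-based approach sidesteps this bookkeeping entirely and also adapts more readily to Theorem~\ref{thm:nonexist2}, where one only assumes a hypothetical $C^\alpha$ solution and cannot rely on the finer $X^{1-\alpha}$ structure.
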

\begin{remark}
	The condition on the initial data necessary for illposedness in Theorem \ref{thm:nonexist1} can be relaxed to $$\tht_0(x_0) \neq 0, \qquad \limsup_{x \to x_0, x \in \partial \bbR^2_{+}} \frac {|\tht_0(x_0) - \tht_0(x)|}{|x_0 - x|} > 0$$ for some $x_0 \in \partial \bbR^2_{+}$.
\end{remark}

Our last main result, which deals with the case $\alp>1/2$, shows that there is \textit{nonexistence} of solutions not only in $X^{\alp}_c$ but even in $C^{\alp}$. 
\begin{maintheorem}\label{thm:nonexist2}
	Let $\alpha \in (\frac {1}{2},1]$ and assume $\tht_0 \in C^{\infty}_c(\overline{\bbR^2_+})$ does not vanish on the boundary. Then, there is no solution to \eqref{eq:SQG} with initial data $\tht_0$ belonging to $L^\infty([0,\delta];C^{\alpha}(\overline{\bbR^2_{+}}))$ for any $\delta>0$.
\end{maintheorem}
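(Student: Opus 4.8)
The plan is to argue by contradiction: suppose a solution $\tht$ with initial data $\tht_0 \in C^\infty_c(\overline{\bbR^2_+})$, not vanishing on the boundary, exists in $L^\infty([0,\delta]; C^\alpha(\overline{\bbR^2_+}))$ for some $\delta>0$ and $\alpha \in (\frac12,1]$. The key is that $C^\alpha$ spatial regularity of $\tht$ already forces the velocity field $u$ from the Biot--Savart law \eqref{eq:Biot-Savart-aSQG} to be log-Lipschitz (or at least continuous with a modulus of continuity good enough to define a flow), so particle trajectories exist and are unique, and $\tht(t,\cdot)$ remains a rearrangement of $\tht_0$; in particular $\tht$ stays nonvanishing near some boundary point. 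Since $\tht_0$ does not vanish on $\partial \bbR^2_+$, we may pick a boundary point $x_0 = (0, a)$ with $\tht_0(x_0) \neq 0$, WLOG $\tht_0(x_0) > 0$.

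First I would derive a pointwise lower bound on the \emph{tangential} (i.e. $x_2$-direction, which near the boundary is the component of $u$ that is only $C^{1-\alpha}$) derivative-type behavior of the velocity: near a boundary point where $\tht$ does not vanish, the principal contribution to $u_2$ (the second component) comes from the near-diagonal part of the kernel, and the reflected term $(x-\tld y)^\perp/|x-\tld y|^{2+\alpha}$ does \emph{not} cancel the singular behavior of the first term for the relevant component. Concretely, for $x = (\veps, a)$ with $\veps \to 0^+$, one computes that $u_2(t,x)$ (the component tangent to the boundary... actually one must be careful: write $u = (u_1,u_2)$; near $x_1=0$ it is $u_1$, the normal component, that picks up the bad $\veps^{1-\alpha}$ behavior from integrating the kernel against a function that is $\approx \tht(t,x_0) \neq 0$ on a full neighborhood). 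The quantitative claim to establish is $|u_1(t,(\veps,a))| \gtrsim c\, \veps^{1-\alpha}$ for all small $\veps$, uniformly for $t$ in a short time interval, with $c>0$ depending on $|\tht_0(x_0)|$ — this uses that $\tht(t,\cdot)$ is continuous, close to $\tht_0$ near $x_0$ (by the flow being near the identity for short time, plus $u$ bounded), so $\tht(t,\cdot) \geq c_0 > 0$ on a fixed small ball around $x_0$.

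Next I would transfer this velocity lower bound into a contradiction with $\tht \in L^\infty_t C^\alpha_x$. The normal component $u_1$ vanishes on $\partial\bbR^2_+$ (by the odd reflection structure, or simply because the kernel is antisymmetric there), so $u_1(t,(\veps,a)) = u_1(t,(\veps,a)) - u_1(t,(0,a))$, and the lower bound $|u_1(t,(\veps,a))| \gtrsim \veps^{1-\alpha}$ shows $u_1(t,\cdot)$ is \emph{not} better than $C^{1-\alpha}$ in $x_1$ at the boundary — but that alone is consistent with a merely $C^\alpha$ solution. The real contradiction is dynamical: integrate the transport equation along the characteristic through a point $(\veps, a)$; since $\dot x_1 = u_1$ and near the boundary $u_1$ has a sign (say $u_1 < 0$, pushing toward the wall — if it has the other sign, look at a nearby point where $\tht_0$ has the opposite relevant slope, using the relaxed hypothesis in the preceding remark or symmetry, or instead track the level-set geometry as in Remark~\ref{rmk_singularity}), the characteristics from $x_1 = \veps$ reach $x_1 = \veps/2$ in time $\lesssim \int_{\veps/2}^{\veps} \frac{ds}{s^{1-\alpha}} \asymp \veps^{\alpha}$, which $\to 0$ as $\veps \to 0$. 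Meanwhile the characteristic starting \emph{on} the wall stays on the wall. Comparing the value of $\tht$ carried by these two characteristics: $\tht$ at the image of $(\veps,a)$ equals $\tht_0(\veps, a) = \tht_0(x_0) + O(\veps)$, while $\tht$ at the image point on the wall equals $\tht_0(x_0)$; but after the short time the two image points are at comparable horizontal separation $\asymp \veps$ yet — and here is where $\alpha > 1/2$ bites — we instead compare two characteristics starting at horizontal positions $\veps$ and $2\veps$ at the same height: they get squeezed to horizontal separation $\lesssim$ (something far smaller than $\veps$) in time $\asymp \veps^\alpha$, while both carry $\tht$-values differing by $\asymp \veps$; this gives $|\tht(t, P) - \tht(t,Q)| \gtrsim \veps$ with $|P-Q| \ll \veps$, so $\|\tht(t)\|_{C^\alpha} \gtrsim \veps / |P-Q|^\alpha \to \infty$, the desired contradiction.

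The main obstacle I anticipate is making the squeezing estimate for characteristics rigorous and genuinely quantitative, since a priori we only know $u$ is continuous (log-Lipschitz at best), so the flow map is only Hölder-ish and controlling how two nearby trajectories converge requires the structural lower bound $\partial_{x_1} u_1 \asymp -x_1^{-\alpha}$ near the wall — i.e.\ one must show not just that $u_1$ is small but that it is \emph{monotone with a quantified rate} in $x_1$, essentially $u_1(t,(x_1,a)) \asymp -x_1^{1-\alpha}$ with a \emph{matching upper bound} and sign that persists on a fixed $x_1$-interval and fixed time interval. This in turn requires a careful analysis of the Biot--Savart integral: splitting into the region $|y - x| < x_1$ (which gives the leading $x_1^{1-\alpha}$ term with definite sign determined by $\mathrm{sgn}\,\tht_0(x_0)$, using that $\tht(t,\cdot)$ is nearly constant there), the annular region $x_1 < |y-x| < r_0$, and the far region, and showing the first dominates. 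A secondary technical point is justifying existence and uniqueness of characteristics and the fact that $\tht(t,\cdot)$ is a measure-preserving rearrangement of $\tht_0$ purely from $\tht \in L^\infty_t C^\alpha_x$ — this is standard once $u$ is shown to be at least Dini/log-Lipschitz continuous, which follows from $C^\alpha$ regularity of $\tht$ together with $\alpha > 1/2 \Rightarrow 1-\alpha < 1/2 < \alpha$ so the velocity's worst modulus $\omega(r) \asymp r^{1-\alpha}$ is in fact \emph{better} than needed, i.e.\ $u$ is Hölder-$C^{1-\alpha}$ globally and the flow is well-defined.
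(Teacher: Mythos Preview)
Your proposal has the roles of $u_1$ and $u_2$ reversed, and this is fatal to the argument. You claim that the normal component satisfies $|u_1(t,(\varepsilon,a))| \gtrsim \varepsilon^{1-\alpha}$ and then build the contradiction on rapid \emph{horizontal} squeezing of characteristics. But Lemma~\ref{lem:vel2} (which requires only $\tht\in C^\alpha$) shows that $u_1$ is globally log-Lipschitz; since $u_1$ vanishes on $\partial\bbR^2_+$, one in fact has $|u_1(t,(\varepsilon,a))| \lesssim \varepsilon\log(1/\varepsilon)$, and $\Phi_1(t,x)$ stays comparable to $x_1$ on any fixed time interval (cf.~\eqref{eq:Phi-1-est2}). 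Hence two trajectories starting at $(\varepsilon,a)$ and $(2\varepsilon,a)$ keep horizontal separation $\asymp \varepsilon$ for $O(1)$ time, your ``hitting time $\asymp \varepsilon^\alpha$'' never occurs, and no norm inflation results.

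The singular behavior lives in the \emph{tangential} component: Lemma~\ref{lem:vel2} gives $u_2(x) = \tht(x)f(x) + (\text{log-Lipschitz})$ with $\partial_1 f(x) = C_\alpha(x)\,x_1^{-\alpha}$, so points at different distances from the wall drift in $x_2$ at rates differing by $\sim x_1^{1-\alpha}$. The paper exploits this by choosing $x,x'$ with $x_1=\ell^{-1}$, $x'_1=\ell^{-2}$ and initial $x_2$-gap $\asymp \ell^{-(1-\gamma)}$: the $x_2$-components of the trajectories collapse at some time $t^*\lesssim\ell^{-(\alpha-\gamma)}$, while the $x_1$-separation remains $\asymp\ell^{-1}$. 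One also needs $\partial_2\tht_0(x_0)\neq 0$ (always arrangeable, since $\tht_0|_{\partial\bbR^2_+}$ is smooth, compactly supported, and not identically zero) so that $|\tht_0(x)-\tht_0(x')|\asymp\ell^{-(1-\gamma)}$; the H\"older quotient at time $t^*$ is then $\gtrsim \ell^{\,\beta+\gamma-1}\to\infty$ for any $\beta>1-\alpha$ once $\gamma\in(1-\beta,\alpha)$. Your scheme misses both the correct direction of squeezing and the role of $\partial_2\tht_0$. (A side remark: mere $C^{1-\alpha}$ regularity of $u$ would \emph{not} suffice to define the flow when $\alpha>\tfrac12$; one needs the log-Lipschitz structure of $u_1$ together with \eqref{eq:Phi-1-est2} to keep trajectories off the boundary, after which $u$ is log-Lipschitz on the relevant region.)
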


Our main results are summarized in Figure \ref{fig:wp}. Namely, for $\alpha \in (0,1]$ and $\beta \in [0,1]$, we have the following three distinct regimes:

\begin{figure}[htbp]
	\centering
	\includegraphics[scale=1]{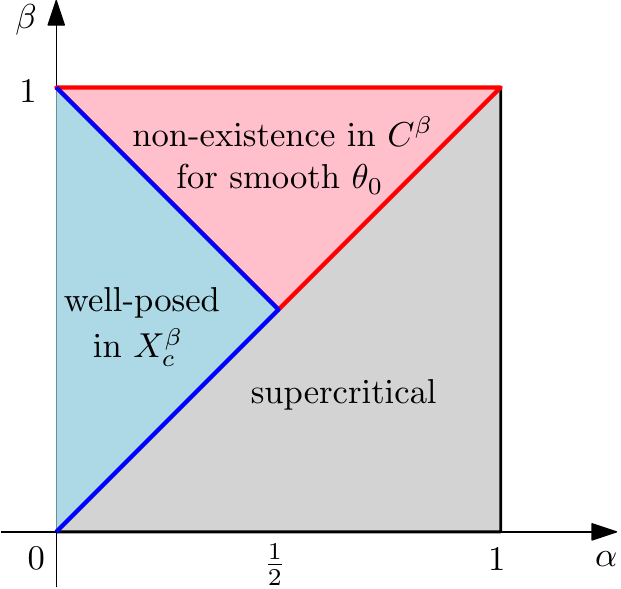}  
	\caption{Illustration of well-posedness result of $\alp$-SQG in $X^{\bt}_c$ spaces (in blue color), and non-existence results in $C^\beta$ for smooth initial data (in red color).} \label{fig:wp}
\end{figure}
\begin{itemize}
    \item The ``wellposed'' region is given by $\{0< \alp \le 1/2,\alp \le \bt \le 1-\alp\}$: note that the boundary points are included (Theorem \ref{thm_loc1}). The case $\bt=\alp$  with $\alp>0$ is especially interesting since it is known that $\alpha$-SQG equations are known to be illposed in the critical space $C^{\alp}$ (and also in $H^{1+\alpha}$) in $\mathbb{R}^2$ (\cite{JK-axi,CM,EM1}). On the other hand, our result shows that when $\alp\le 1/2$, we have well-posedness for datum which behaves like $\tht_{0}(x) \sim \mathrm{const} + x_1^{\alp}$ near the boundary $\{ x_1 = 0 \}$, which is exactly $C^{\alp}$ and not better in the scale of H\"older spaces. 
    \item On the other hand, the ``illposed'' region is given by $\{ 1/2<\bt \le1 , 1-\bt < \alp \le \bt\}$ for any given smooth and compactly supported initial data that do not vanish on the boundary (Theorems \ref{thm:nonexist1} and \ref{thm:nonexist2}). Note that the illposedness is proved not just in $X^{\bt}$ but in $C^{\bt}$. Indeed, if we consider any singular initial data $\tht_0 \in C^{\alpha}_c(\overline{\bbR^2_{+}})$ with $$\tht_0(x_0) \neq 0, \qquad \limsup_{x \to x_0, x \in \partial \bbR^2_{+}} \frac {|\tht_0(x_0) - \tht_0(x)|}{|x_0 - x|^{\alpha}} > 0$$ for some $x_0 \in \partial \bbR^2_{+}$, it is not hard to prove that \eqref{eq:SQG} are ill-posed in the critical H\"older space.
    \item Lastly, the regime $\bt<\alp$ is ``supercritical'' with respect to the scaling of the $\alp$-SQG equations; local well-posedness in $C^{\bt}$ (or even in $X^{\bt}$) is not expected, even in the whole space case $\bbR^{2}$. There have been several exciting developments in this regime: see \cite{Elling2016,CM}.
\end{itemize} 
  
 In the very recent preprint by Zlato\v{s} \cite{Zlatos}, the local well-posedness of solutions was established in the same regime of parameters as our Theorem~\ref{thm_loc1}. Compared to our work, \cite{Zlatos} obtained ill-posedness in $X^\beta$ for broader parameters, covering both our ill-posed regime and supercritical regime. The idea was to construct some Lipschitz initial data of series form and to show that such initial data leaves $X^\beta$ immediately. Our ill-posedness result in Theorem~\ref{thm:nonexist1} and \ref{thm:nonexist2} only deals with the red set in Figure \ref{fig:wp}, but it holds for more general initial data: namely, for every smooth initial data that does not vanish on the boundary, we show that the solution must leave $C^\beta$ immediately (recall that $X^\beta$ is a subset of $C^\beta$).

\subsection{Outline of the paper}

The remainder of this paper is organized as follows. In Section \ref{sec:key}, we collect a few key estimates for the velocity. The proof of well-posedness (Theorem \ref{thm_loc1}) is given in Section \ref{sec:lwp}. Lastly, we prove nonexistence results (Theorems \ref{thm:nonexist1} and \ref{thm:nonexist2}) in Section \ref{sec:illposed}.  

\subsection*{Acknowledgments} IJ has been supported by the Samsung Science and Technology Foundation under Project Number SSTF-BA2002-04.  YY is partially supported by the NUS startup grant A-0008382-00-00 and MOE Tier 1 grant A-0008491-00-00.

\section{Estimates on the velocity}\label{sec:key}

In this section, we collect a few ``frozen-time'' estimates on the velocity. 

\subsection{Key Lemma}

The following statement shows the precise regularity of the velocity under the assumption $\tht \in X^{\alp}$. 
\begin{lemma}\label{lem:vel}
	Let $\alp \in (0,1)$ and $\tht : \overline{\bbR^2_+} \to \bbR$ with $\supp \tht \subset B(0;1)$ and $| x_1^{1-\alpha}\partial_1 \tht(x) | + | \partial_2 \tht(x)| \leq C$ for some $C>0$. Then, the velocity $u =- \nb^\perp (-\lap)^{-1+ \frac{\alp}{2}}\tht$ satisfies \begin{equation}\label{eq:u-logLip}
		\begin{split}
			\nrm{u_1}_{C^{1,1-\alpha}(\overline{\bbR^2_+})}+\nrm{\partial_2u_2}_{C^{1-\alpha}(\overline{\bbR^2_{+}})} + \nrm{\partial_1(u_2 - U_2)}_{L^{\infty}(\overline{\bbR^2_+})} \le C\left(\nrm{x_1^{1-\alpha} \partial_1\tht}_{L^\infty(\bbR^2_+)} + \nrm{\partial_2\tht}_{L^\infty(\bbR^2_+)} \right)
		\end{split}
	\end{equation} where \begin{equation}\label{eq:U2-def}
	\begin{split}
		U_2(x) := - \frac2{\alp}\int_{-\infty}^{\infty} \frac { \theta(0,y_2) }{|x-(0,y_2)|^{\alp}}\,\mathrm{d}y_2.
	\end{split}
\end{equation} Furthermore, $U_2$ satisfies \begin{equation}\label{eq:U2-est}
\begin{split}
	\nrm{\rd_2 U_2}_{L^\infty({\bbR^2_+})} \le C\nrm{\partial_2 \tht}_{L^\infty(\bbR^2_+)}
\end{split}
\end{equation} and \begin{equation}\label{eq:U2-est-unbounded}
\begin{split}
	\left| \rd_1 U_2 (x) - C_\alp x_1^{-\alp} \tht(0,x_2)  \right| \le  Cx_1^{1-\alp} \nrm{\partial_2\tht}_{L^\infty}
\end{split}
\end{equation} where \begin{equation}\label{eq:C-alp-def}
\begin{split}
	C_\alp :=  2 \int_{\bbR} \frac{\ud z}{ (1+z^2)^{\alp/2+1} } . 
\end{split}
\end{equation} In particular, in the region $\{ x_1 \ge L \}$, we have $\nrm{u}_{Lip}\le CL^{-\alp} \big(\nrm{x_1^{1-\alpha} \partial_1\tht}_{L^\infty(\bbR^2_+)} + \nrm{\partial_2\tht}_{L^\infty(\bbR^2_+)} \big)$.
\end{lemma}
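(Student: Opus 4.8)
\textbf{Proof plan for Lemma~\ref{lem:vel}.}

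The plan is to compute the velocity from the Biot--Savart law \eqref{eq:Biot-Savart-aSQG} and isolate the boundary contribution. First I would write the kernel
$K(x,y) = \frac{(x-y)^\perp}{|x-y|^{2+\alp}} - \frac{(x-\tilde y)^\perp}{|x-\tilde y|^{2+\alp}}$
and note it is smooth away from the diagonal; the only singular behavior comes from $y$ near $x$, and when $x$ is close to the boundary, from the interaction of the two terms. The natural first step is to integrate by parts in $y$ to trade the singularity of the kernel against the (one-sided, weighted) derivatives $\partial_1\tht$ and $\partial_2\tht$ that we control by hypothesis. Specifically, for the $u_1$ component one expects that one full integration by parts (against $\partial_2\tht$, since the kernel's $x_1$-component pairs naturally with a $y_2$-derivative after recognizing a total derivative structure) produces a kernel that is Calder\'on--Zygmund of the right order, giving $u_1 \in C^{1,1-\alp}$; the boundary terms from the integration by parts vanish because $\tht$ is compactly supported in $\overline{\bbR^2_+}$ and the reflected term cancels the contribution on $\{y_1=0\}$. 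For $u_2$, the same scheme works in the interior but the boundary term on $\{y_1 = 0\}$ does \emph{not} cancel — it produces exactly the boundary integral $U_2$ in \eqref{eq:U2-def} (the factor $-2/\alp$ and the exponent $-\alp$ arise from $\int_0^\infty \partial_{y_1}|x-y|^{-\alp}\,dy_1$-type computations and the reflection doubling the on-axis contribution). So the decomposition $u_2 = (u_2 - U_2) + U_2$ is forced on us by the integration by parts, and $u_2 - U_2$ should then be estimated by the same CZ-type bounds as $u_1$, yielding $\|\partial_2 u_2\|_{C^{1-\alp}}$ and $\|\partial_1(u_2 - U_2)\|_{L^\infty}$.

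The second block of the proof is the direct analysis of the explicit one-dimensional integral $U_2$ in \eqref{eq:U2-def}. For \eqref{eq:U2-est}, I would differentiate under the integral sign in $x_2$; since $\partial_{x_2}|x-(0,y_2)|^{-\alp} = -\partial_{y_2}|x-(0,y_2)|^{-\alp}$ up to sign, an integration by parts in $y_2$ moves the derivative onto $\tht(0,y_2)$, and then the kernel $|x-(0,y_2)|^{-\alp}$ is integrable in $y_2$ uniformly over the compact support (its $L^1_{y_2}$ norm is bounded since $\alp<1$), giving the clean bound by $\|\partial_2\tht\|_{L^\infty}$. For \eqref{eq:U2-est-unbounded}, the point is that $\partial_1 U_2$ is genuinely unbounded as $x_1\to 0$: differentiating in $x_1$ brings down a factor $\sim x_1 |x-(0,y_2)|^{-\alp-2}$, and after the rescaling $y_2 = x_2 + x_1 z$ the leading term is $C_\alp x_1^{-\alp}\tht(0,x_2)$ with $C_\alp$ as in \eqref{eq:C-alp-def} (this is just $\int_{\bbR}(1+z^2)^{-\alp/2-1}dz$ times the appropriate constant from the $x_1$-derivative), while the error is controlled by replacing $\tht(0,x_2+x_1 z)$ by $\tht(0,x_2)$ at the cost of $x_1|z|\,\|\partial_2\tht\|_{L^\infty}$, which integrates (again using $\alp<1$ for convergence of $\int |z|(1+z^2)^{-\alp/2-1}dz$) to the claimed $x_1^{1-\alp}$ remainder. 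The final assertion about $\|u\|_{\mathrm{Lip}}$ on $\{x_1\ge L\}$ then follows by combining: on that region $x_1^{-\alp}\le L^{-\alp}$, so the $\partial_1 U_2$ term is bounded by $L^{-\alp}$ times the data, and all the other pieces ($\partial_2 U_2$, $\partial_1(u_2-U_2)$, $\nabla u_1$) are already globally bounded and in fact decay like $L^{-\alp}$ by a direct size estimate on the kernel far from the support scale (or by interpolating the $C^{1,1-\alp}$ bound against the decay of $u$ itself).

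The main obstacle I anticipate is the bookkeeping around the integration by parts in the $u_2$ computation: one must carefully track which boundary term survives, verify that it is \emph{exactly} $U_2$ with the stated normalization rather than $U_2$ plus lower-order but still unbounded junk, and confirm that the reflected kernel contributes a factor that merely doubles (rather than cancels or distorts) the on-axis term. A secondary technical point is justifying the integration by parts and differentiation under the integral sign when $\tht$ is only assumed to satisfy the one-sided weighted bound $|x_1^{1-\alp}\partial_1\tht| + |\partial_2\tht|\le C$ near the boundary — here one should work with a smooth truncation away from $\{x_1=0\}$, derive the estimates with constants independent of the truncation parameter, and pass to the limit, using that the weight $x_1^{1-\alp}$ is exactly integrable against the singular kernels that appear. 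Everything else (the CZ estimates, the elementary one-dimensional integrals) is routine once the decomposition is in place.
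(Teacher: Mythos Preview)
Your plan for $u_1$, for the explicit $U_2$ estimates \eqref{eq:U2-est}--\eqref{eq:U2-est-unbounded}, and for the final Lipschitz bound is essentially the paper's argument. The real gap is in your treatment of $\partial_1(u_2-U_2)$, which you describe as following from ``the same CZ-type bounds as $u_1$''. This does not work: after your integration by parts one has
\[
\partial_1(u_2-U_2)(x)=\int_{\bbR^2_+}\Big[\frac{x_1-y_1}{|x-y|^{2+\alp}}+\frac{x_1+y_1}{|\tilde x-y|^{2+\alp}}\Big]\,\partial_1\tht(y)\,\ud y,
\]
and the crucial difference from the $u_1$ computation is that here the density is $\partial_1\tht$, which is \emph{not} in $L^\infty$ --- only $y_1^{1-\alp}\partial_1\tht$ is bounded. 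The standard fractional-integral estimate (bounded density $\Rightarrow$ $C^{1-\alp}$ output) is therefore unavailable, and in fact each of the two kernel terms separately, integrated against $|\partial_1\tht(y)|\lesssim y_1^{\alp-1}$, produces a $\log(1/x_1)$ divergence from the region $y_1\gg x_1$.

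What the paper actually does here is the heart of the lemma: split the $y$-integration into $\{y_1\le \tfrac32 x_1\}$ and $\{y_1\ge \tfrac32 x_1\}$. On the near piece one absorbs the weight $y_1^{\alp-1}$ by the rescaling $y=(x_1z_1,x_2+x_1z_2)$ and direct computation. On the far piece one must exploit cancellation between the direct and reflected kernels: writing
\[
\frac{x_1-y_1}{|x-y|^{2+\alp}}+\frac{x_1+y_1}{|\tilde x-y|^{2+\alp}}
= x_1\Big(\frac{1}{|x-y|^{2+\alp}}+\frac{1}{|\tilde x-y|^{2+\alp}}\Big) + y_1\,\frac{|x-y|^{2+\alp}-|\tilde x-y|^{2+\alp}}{|x-y|^{2+\alp}|\tilde x-y|^{2+\alp}},
\]
the second term gains a factor of $x_1$ via the mean value theorem (the paper's identity \eqref{eq:mvt}), and this extra smallness is exactly what kills the logarithm. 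Your remark that ``the weight $x_1^{1-\alp}$ is exactly integrable against the singular kernels'' does not capture this mechanism; the needed cancellation is between the two kernel pieces, not between the kernel and the weight, and it is not a consequence of any standard CZ theory. This is the step you should flag as the main obstacle, not the bookkeeping of boundary terms.
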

\begin{remark}
	For any $x_1 \neq x'_1$, we have  \begin{equation*}
\begin{split}
	\frac {|\tht(x_1,x_2) - \tht(x'_1,x_2)|}{|x_1 - x'_1|^{\alpha}} 
    \leq C\| x_1^{1-\alpha} \partial_1 \tht \|_{L^{\infty}}.
\end{split}
\end{equation*}
\end{remark}
\begin{remark}\label{rmk:SQG}
	In the case of $\alpha = 1$, we can obtain the same result with \begin{equation*}
		\begin{split}
			|u_1(x) - u_1(x')| + |(u_2 - U_2)(x) - (u_2 - U_2)(x')| \le C\| \tht \|_{C^1} |x-x'| \log \left( 10 + \frac {1}{|x-x'|} \right)
		\end{split}
	\end{equation*} 		instead of  \eqref{eq:u-logLip}.
\end{remark}
\begin{remark}
    One can replace \eqref{eq:U2-est-unbounded} by     \begin{equation*}
        \begin{split}
            \left| \rd_1 U_2 (x) - C_\alp x_1^{-\alp} \tht(x)  \right| \le  C(x_1^{1-\alp} \nrm{\partial_2\tht}_{L^\infty} + \| x_1^{1-\alpha} \partial_1 \tht \|_{L^{\infty}}).
        \end{split}
    \end{equation*} 
\end{remark}

\begin{proof} Let $\bar{\tht}$ to be the odd extension of $\tht$ in $\bbR^2$; that is, we set $\bar{\tht}(x_1,x_2)=-\tht(-x_1,x_2)$ for $x_1<0$ and $\bar{\tht}=\tht$ otherwise. Recalling the Biot--Savart law \eqref{eq:Biot-Savart-aSQG}, we have 	\begin{equation*}
		\begin{split}
			u_1(x) &= -\int_{\bbR^2} \frac{(x_2-y_2)}{|x-y|^{2+\alp}} \bar{\tht}(y) \, \ud y =  -\frac1{\alp}\int_{\bbR^2} \rd_{y_2} |x-y|^{-\alp} \bar{\tht}(y) \, \ud y \\
			& = \frac1{\alp}\int_{\bbR^2}  |x-y|^{-\alp} \rd_2\bar{\tht}(y) \, \ud y = \frac1{\alp} \int_{\bbR^2_+} \left[  \frac{1}{|x-y|^\alp} - \frac{1}{|x-\tilde{y}|^\alp}  \right] \rd_2 \tht(y) \, \ud y. 
		\end{split}
	\end{equation*} 	Then, it holds 	\begin{equation*}
		\begin{split}
			\nabla u_1(x) = -\int_{\bbR^2_+} \left[  \frac{x-y}{|x-y|^{2+\alp}} - \frac{x-\tilde{y}}{|x-\tilde{y}|^{2+\alp}}  \right] \rd_2 \tht(y) \, \ud y. 
		\end{split}
	\end{equation*} 	Given $f \in L^\infty$ with $\supp f \subset B(0;1)$, it is not difficult to see that \begin{equation*}
	\begin{split}
		\int_{ \bbR^2_+ } \frac{x-y}{|x-y|^{2+\alp}} f(y) \, \ud y, \qquad 	\int_{ \bbR^2_+ } \frac{x-\tilde{y}}{|x-\tilde{y}|^{2+\alp}} f(y) \, \ud y
	\end{split}
\end{equation*} are $C^{1-\alpha}$ functions of $x$, in the region $\overline{\bbR^2_+}$. Therefore, 	\begin{equation*}
		\begin{split}
			\nrm{u_1}_{C^{2-\alpha}(\overline{\bbR^2_+})}+\nrm{\partial_2u_2}_{C^{1-\alpha}(\overline{\bbR^2_{+}})} \le C\nrm{\partial_2\tht}_{L^\infty(\bbR^2_+)}.
		\end{split}
	\end{equation*} 	On the other hand, \begin{equation*}
		\begin{split}
			u_2(x) = -\frac1{\alp} \int_{\bbR^2_+} \left[\frac{1}{|x-y|^\alp}+\frac{1}{|x-\tilde{y}|^\alp}\right] \rd_1 \tht(y) \, \ud y  + U_2(x)
		\end{split}
	\end{equation*} with $U_2(x)$ defined as in \eqref{eq:U2-def}. Thus, it holds 	\begin{equation*}
		\begin{split}
			\partial_1 (u_2(x) - U_2(x)) = \int_{\bbR^2_+} \left[  \frac{x_1-y_1}{|x-y|^{2+\alp}} + \frac{x_1+y_1}{|\tilde{x}-y|^{2+\alp}}  \right] \rd_1 \tht(y) \, \ud y. 
		\end{split}
	\end{equation*} 	We claim $$\left| \int_{0 \leq y_1 \leq \frac {3}{2}x_1} \frac{x_1-y_1}{|x-y|^{2+\alp}} \rd_1 \tht(y) \, \ud y \right| + \left| \int_{0 \leq y_1 \leq \frac {3}{2}x_1} \frac{x_1+y_1}{|\tilde{x}-y|^{2+\alp}} \rd_1 \tht(y) \, \ud y \right| \leq C \| x_1^{1-\alpha} \partial_1 \tht \|_{L^{\infty}}$$ first. Note that $$\left| \int_{0 \leq y_1\leq \frac {1}{2}x_1} \frac {x_1-y_1}{|x-y|^{2+\alpha}} \partial_1\tht(y) \,\ud y \right|  \leq Cx_1 \| x_1^{1-\alpha}\partial_1\tht \|_{L^{\infty}} \int_{0 \leq y_1 \leq \frac {1}{2}x_1} \frac {1}{|x-y|^{2+\alpha}} \frac {1}{y_1^{1-\alpha}} \,\ud y.$$ Since change of variables $y = (x_1z_1,x_2 + x_1z_2)$ gives $$\int_{0 \leq y_1 \leq \frac {1}{2}x_1} \frac {1}{|x-y|^{2+\alpha}} \frac {1}{y_1^{1-\alpha}} \,\ud y = x_1^{-1} \int_{0 \leq z_1 \leq \frac {1}{2}} \frac {1}{|(1-z_1,z_2)|^{2+\alpha}} \frac {1}{z_1^{1-\alpha}} \,\ud z \leq Cx_1^{-1}.$$ Note that
	\begin{equation*}
	\begin{aligned}
		\left| \int_{|y_1-x_1| \leq \frac {1}{2}x_1} \frac {x_1-y_1}{|x-y|^{2+\alpha}} \partial_1\tht(y) \,\ud y \right| &\leq \int_{|y_1-x_1| \leq \frac {1}{2}x_1} \frac {|\partial_1\tht(y)|}{|x-y|^{1+\alpha}}  \,\ud y \\
		&\leq x_1^{-(1-\alpha)} \| x_1^{1-\alpha} \partial_1 \tht \|_{L^{\infty}} \int_{|y_1-x_1| \leq \frac {1}{2}x_1} \frac {1}{|x-y|^{1+\alpha}} \,\ud y.
	\end{aligned}
\end{equation*}		It is clear that $$\int_{|y_1-x_1| \leq \frac {1}{2}x_1,|y_2-x_2| \leq \frac {1}{2}x_1} \frac {1}{|x-y|^{1+\alpha}} \,\ud y \leq \int_{|y-x| \leq x_1} \frac {1}{|x-y|^{1+\alpha}} \,\ud y \leq x_1^{1-\alpha}.$$ Since $$\int_{|y_2-x_2| \geq \frac {1}{2}x_1} \frac {1}{|x-y|^{1+\alpha}} \,\ud y_2 \leq \int_{|y_2-x_2| \geq \frac {1}{2}x_1} \frac {1}{|x_2-y_2|^{1+\alpha}}\,\ud y_2 \leq C x_1^{-\alpha},$$ we have $$\int_{|y_1-x_1| \leq \frac {1}{2}x_1,|y_2-x_2| \geq \frac {1}{2}x_1} \frac {1}{|x-y|^{1+\alpha}} \,\ud y \leq Cx_1^{1-\alpha}.$$ Thus, the claim is obtained. It is left to show		\begin{equation*}
		\begin{split}
			\left| \int_{y_1 \geq \frac {3}{2}x_1} \left[  \frac{x_1-y_1}{|x-y|^{2+\alp}} + \frac{x_1+y_1}{|\tilde{x}-y|^{2+\alp}}  \right] \rd_1 \tht(y) \, \ud y \right| \leq C\| x_1^{1-\alpha} \partial_1\tht \|_{L^{\infty}}.
		\end{split}
	\end{equation*} 	For this, we write,  	\begin{equation*}
		\begin{split}
			\frac{x_1-y_1}{|x-y|^{2+\alp}} + \frac{x_1+y_1}{|\tilde{x}-y|^{2+\alp}} &= x_1 \left( \frac{1}{|x-y|^{2+\alp}} + \frac{1}{|\tilde{x}-y|^{2+\alp}} \right) + y_1 \frac {|x-y|^{2+\alpha} - |\tilde{x}-y|^{2+\alpha}}{|\tilde{x}-y|^{2+\alpha} |x-y|^{2+\alpha}}.
		\end{split}
	\end{equation*} 	It is not difficult to show that		\begin{equation*}
	\begin{aligned}
		\left| \int_{y_1 \geq \frac {3}{2}x_1} x_1 \left( \frac{1}{|x-y|^{2+\alp}} + \frac{1}{|\tilde{x}-y|^{2+\alp}} \right) \rd_1 \tht(y) \, \ud y \right| \leq C\| x_1^{1-\alpha} \partial_1\tht \|_{L^{\infty}}.
	\end{aligned}
\end{equation*}		Using that		\begin{equation}\label{eq:mvt}
	\begin{aligned}
		|x-y|^{2+\alpha} - |\tilde{x}-y|^{2+\alpha} &= \int_0^1 \frac {\ud}{\ud \tau} |\tilde{x}+(x-\tilde{x})\tau - y|^{2+\alpha} \,\ud \tau \\
		&= (2+\alpha) \int_0^1 \left( 4x_1^2\tau - 2x_1(x_1+y_1)  \right)|\tilde{x}+(x-\tilde{x})\tau - y|^{\alpha} \,\ud \tau \\
		&\leq Cx_1(x_1+y_1) |\tilde{x} - y|^{\alpha}
	\end{aligned}
\end{equation}		for any $x,y \in \bbR^2_{+}$, we have		\begin{equation*}
	\begin{aligned}
		\left| \int_{y_1 \geq \frac {3}{2}x_1} y_1 \frac {|x-y|^{2+\alpha} - |\tilde{x}-y|^{2+\alpha}}{|\tilde{x}-y|^{2+\alpha} |x-y|^{2+\alpha}} \rd_1 \tht(y) \, \ud y \right| &= \left| \int_{y_1 \geq \frac {3}{2}x_1} \frac {x_1y_1 (x_1+y_1)}{|\tilde{x}-y|^{2} |x-y|^{2+\alpha}} \rd_1 \tht(y) \, \ud y \right| \\
		&\leq Cx_1^{\alpha} \| x_1^{1-\alpha} \partial_1\tht \|_{L^{\infty}} \int_{y_1 \geq \frac {3}{2}x_1} \frac {1}{|x-y|^{2+\alpha}} \, \ud y \\
		&\leq C \| x_1^{1-\alpha} \partial_1\tht \|_{L^{\infty}}.
	\end{aligned}
\end{equation*}		This gives \eqref{eq:u-logLip}. Using that \begin{equation*}
\begin{split}
	\rd_{2} U_{2}(x) := - \frac2{\alp}\int_{-\infty}^{\infty} \frac { \rd_{2} \theta(0,y_2) }{|x-(0,y_2)|^{\alp}}\,\mathrm{d}y_2,
\end{split}
\end{equation*} we see that $\partial_2U_{2}$ is $C^{1-\alpha}$. Furthermore, by writing \begin{equation*}
\begin{split}
	\rd_1U_2(x_1,x_2) & = 2\int_{\bbR} \frac{x_1 \tht(0,y_2)}{|x-(0,y_2)|^{\alp+2}} \, \ud y_2 = 2x_1^{-\alp} \int_{\bbR} \frac{\tht(0,x_2+zx_1)}{ (1+z^2)^{\alp/2+1} } \, \ud z  \\
	& = C_\alp x_1^{-\alp} \tht(0,x_2) +   2x_1^{-\alp} \int_{\bbR} \frac{\tht(0,x_2+zx_1) - \tht(0,x_2)}{ (1+z^2)^{\alp/2+1} } \, \ud z  
\end{split}
\end{equation*} with $C_\alp$ defined in \eqref{eq:C-alp-def}, we see that the last term is bounded by \begin{equation*}
\begin{split}
	\le Cx_1^{-\alp} \nrm{\partial_2\tht}_{L^\infty} \int_{\bbR} \frac{|zx_1|}{(1+z^2)^{\alp/2+1}} \, \ud z \le Cx_1^{1-\alp} \nrm{\partial_2\tht}_{L^\infty}. 
\end{split}
\end{equation*} This finishes the proof. 
\end{proof}

We can also prove the following lemma.
\begin{lemma}\label{lem:vel2}
	Let $\alp \in (0,1)$ and  $\tht \in C^{\alpha}_c(\overline{\bbR^2_+})$ with $\supp \tht \subset B(0;1)$. Assume that $\varphi \in C^{\infty}_c(\overline{\bbR^2_{+}})$ be a smooth bump function with $\varphi(x) = 1$ for $x \in B(0;16)$. Then, the velocity $u =- \nb^\perp (-\lap)^{-1+ \frac{\alp}{2}}\tht$ satisfies \begin{equation}\label{eq:u-logLip2}
		\begin{gathered}
			|u_1(x) - u_1(x')| + |u_2(x) - u_2(x') - \tht(x)(f(x) - f(x'))| \le C|x-x'|\log\left(10+\frac {1}{|x-x'|}\right) \| \tht \|_{C^{\alpha}(\bbR^2_{+})},
		\end{gathered}
	\end{equation}		
where \begin{equation}\label{eq:U2-def2}
	\begin{split}
		f(x) := -\frac {2}{\alpha}\int_{\bbR} \frac {1}{|x-(0,z)|^{\alpha}} \varphi(0,z) \,\ud z.
	\end{split}
\end{equation} Furthermore, it holds \begin{equation}\label{eq:U2-est2}
\begin{split}
	\left|\partial_2f(x)\right| \le C
\end{split}
\end{equation} and \begin{equation}\label{eq:U2-est-unbounded2}
\begin{split}
	\partial_1f(x) = C_\alp(x) x_1^{-\alp},
\end{split}
\end{equation} where \begin{equation}\label{eq:C-alp-def2}
\begin{split}
	C_\alp(x) :=  2 \int_{\bbR} \frac{1}{ (1+z^2)^{1+\alp/2} }  \varphi(0,x_2+x_1z) \,\ud z. 
\end{split}
\end{equation}
\end{lemma}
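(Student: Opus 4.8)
The two displayed identities for $f$ would be established exactly as the corresponding facts about $U_2$ in the proof of Lemma~\ref{lem:vel}. Differentiating \eqref{eq:U2-def2} in $x_1$ under the integral and substituting $z=x_2+x_1w$ gives \eqref{eq:U2-est-unbounded2} with $C_\alp(x)$ as in \eqref{eq:C-alp-def2}. For $\rd_2 f$ one uses that $\int_\bbR(x_2-z)\abs{x-(0,z)}^{-\alp-2}\,\ud z=0$ by oddness, so
\[
\rd_2 f(x)=2\int_\bbR (x_2-z)\abs{x-(0,z)}^{-\alp-2}\bigl(\varphi(0,z)-\varphi(0,x_2)\bigr)\,\ud z,
\]
and then $\abs{\varphi(0,z)-\varphi(0,x_2)}\aleq\abs{z-x_2}\le\abs{x-(0,z)}$, together with the compact support of $\varphi$, yields $\abs{\rd_2f}\aleq1$.

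For \eqref{eq:u-logLip2} I may assume $h:=\abs{x-x'}\le1$ and that $x,x'$ lie in a fixed ball, since outside such a ball the kernels are non-singular and $u,f$ are smooth and small. Let $\bar{\tht}$ be the odd-in-$y_1$ extension of $\tht$, so that by \eqref{eq:Biot-Savart-aSQG} $u$ is the whole-plane Biot--Savart velocity of $\bar{\tht}$; this $\bar{\tht}$ is bounded but jumps across $\{y_1=0\}$. Fix an even cutoff $\chi$ with $\chi\equiv1$ on $[-1,1]$ and $\supp\chi\subset[-2,2]$, and write $\bar{\tht}=\bar{\tht}_\flat+\bar{\tht}_\sharp$ with $\bar{\tht}_\flat(y):=\sgn(y_1)\,\tht(0,y_2)\,\chi(y_1)$. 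Since $\varphi\equiv1$ on $\supp\tht(0,\cdot)\subset B(0;1)$, one checks $\bar{\tht}_\sharp\in C^\alp_c(\bbR^2)$ with $\nrm{\bar{\tht}_\sharp}_{C^\alp}\aleq\nrm{\tht}_{C^\alp}$ (near $\{y_1=0\}$ it equals $\tht(y_1,y_2)-\tht(0,y_2)=O(\abs{y_1}^\alp)$, and elsewhere it is a smooth-in-$y_1$ tensor of $C^\alp$ functions). Accordingly $u=u^\flat+u^\sharp$. The velocity $u^\sharp$ is the whole-plane Biot--Savart velocity of a $C^\alp_c$ function, and I would bound $\abs{u^\sharp(x)-u^\sharp(x')}\aleq h\log(10+1/h)\nrm{\tht}_{C^\alp}$ by the classical log-Lipschitz argument: split $v(x)-v(x')=\int(K(x-y)-K(x'-y))g(y)\,\ud y$, $K(z)=z^\perp\abs{z}^{-2-\alp}$, over $B(x;2h)$ and its complement; on the near ball use $\int_{B(0;2h)}K=0$ and $\abs{g(y)-g(x)}\aleq h^\alp$; on the far region use $\abs{K(x-y)-K(x'-y)}\aleq h\abs{x-y}^{-2-\alp}$ against $\abs{g(y)-g(x)}\aleq\abs{x-y}^\alp$ (this produces the $\log(1/h)$); and observe that the two ``mean-value'' pieces combine into $(g(x)-g(x'))\int_{B(x'-x;2h)}K$, of size $\aleq h^\alp\cdot h^{1-\alp}=h$ --- the extra $\alp$-singularity of $K$ being precisely compensated by the $C^\alp$ modulus of $g$.

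For $u^\flat$, the half-plane velocity of the tensor $\sgn(y_1)\tht(0,y_2)\chi(y_1)$, I would integrate by parts in $y_1$ (noting $\rd_{y_1}(\sgn(y_1)\chi(y_1))$ is twice the unit mass at $y_1=0$ plus the smooth function $\sgn(y_1)\chi'(y_1)$) to get
\[
u^\flat_2(x)=-\frac{2}{\alp}\int_\bbR\abs{x-(0,z)}^{-\alp}\tht(0,z)\,\ud z+r_2(x)=:\tld{U}_2(x)+r_2(x),
\]
where $r_2$ is the convolution of $\abs{\cdot}^{-\alp}$ with a $C^\alp_c$ density, hence Lipschitz (and smooth for $x_1\le\tfrac14$). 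For $u^\flat_1$ one first kills the $y_2$-integrals of both kernels by oddness in $x_2-y_2$, obtaining after the reflection $y_1\mapsto-y_1$ a single integral of $\bigl[\tfrac{x_2-y_2}{\abs{x-y}^{2+\alp}}-\tfrac{x_2-y_2}{\abs{x-\tld{y}}^{2+\alp}}\bigr]\chi(y_1)\bigl(\tht(0,y_2)-\tht(0,x_2)\bigr)$; when differenced in $x$ the $\tht(0,x_2)-\tht(0,x'_2)$ contribution again drops by oddness, and the pointwise bound $\bigl|\tfrac{x_2-y_2}{\abs{x-y}^{2+\alp}}-\tfrac{x_2-y_2}{\abs{x-\tld{y}}^{2+\alp}}\bigr|\aleq x_1y_1\abs{x-y}^{-3-\alp}$ (obtained analogously to \eqref{eq:mvt}), used when $\abs{x-y}\gtrsim x_1$, together with the crude bound $\abs{x-y}^{-1-\alp}$ when $\abs{x-y}\aleq x_1$ and the gain $\abs{\tht(0,y_2)-\tht(0,x_2)}\aleq\abs{x-y}^\alp$, makes $u^\flat_1$ log-Lipschitz via a near/far split in $\abs{x-y}$. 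Finally, because $\varphi\equiv1$ on $B(0;1)$,
\[
\tld{U}_2(x)-\tld{U}_2(x')-\tht(x)\bigl(f(x)-f(x')\bigr)=-\frac{2}{\alp}\int_\bbR\bigl[\abs{x-(0,z)}^{-\alp}-\abs{x'-(0,z)}^{-\alp}\bigr]\bigl(\tht(0,z)-\tht(x)\varphi(0,z)\bigr)\,\ud z,
\]
and here the density is supported in a fixed bounded set and satisfies $\abs{\tht(0,z)-\tht(x)\varphi(0,z)}\aleq\nrm{\tht}_{C^\alp}\min(\abs{x-(0,z)}^\alp,1)$; splitting this $z$-integral according to whether $\min(\abs{x-(0,z)},\abs{x'-(0,z)})$ exceeds $2h$ --- the factor $\abs{x-(0,z)}^\alp$ absorbing the singularity $\abs{x-(0,z)}^{-\alp-1}$ of the difference quotient --- yields $\aleq h\log(10+1/h)\nrm{\tht}_{C^\alp}$, with the logarithm coming from the separated regime and only a factor $h$ from the singular regime. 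Adding the four contributions (from $u^\sharp$, $r_2$, $u^\flat_1$, and $\tld{U}_2-\tht f$) gives \eqref{eq:u-logLip2}.

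The main obstacle is the last matching: one must verify that the singular, non-log-Lipschitz part of $u_2$ near $\{x_1=0\}$ is \emph{exactly} $\tht(x)\bigl(f(x)-f(x')\bigr)$. In particular the seemingly natural error $(\tht(x)-\tht(x'))f(x')$, which is only $O(h^\alp)$, must fail to appear; this is why $\tht$ is frozen at $x$ in \eqref{eq:u-logLip2}, and why one is forced to estimate $\tld{U}_2(x)-\tld{U}_2(x')-\tht(x)(f(x)-f(x'))$ as a single object rather than term by term. Securing the analogous (but easier) cancellation for $u^\flat_1$, so that no singular part survives in the first component, is the other delicate point.
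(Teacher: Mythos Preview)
Your approach is correct and reaches the same conclusion as the paper, but the organization is genuinely different. The paper never decomposes the density $\bar\tht$; instead it works directly with the velocity integrals, inserting the even extension $\tilde\tht$ and subtracting the constant $\tht(x)\varphi(y)$ so that each of $u_1(x)-u_1(x')$ and $u_2(x)-u_2(x')$ splits into (i) an integral with density $(\tilde\tht(y)-\tht(x))\varphi(y)$, (ii) an integral over $\bbR^2_-$ with density $\bar\tht\varphi$, and (iii) a term with density $\tht(x)\varphi(y)$. Pieces (i) and (iii) are handled by the same near/far argument you use for $u^\sharp$; piece (ii) is again split by subtracting $\tht(x)$, and for $u_2$ an integration by parts on $\bbR^2_-$ produces the boundary term $\tht(x)(f(x)-f(x'))$ directly. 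You instead split $\bar\tht=\bar\tht_\flat+\bar\tht_\sharp$ with $\bar\tht_\flat=\sgn(y_1)\tht(0,y_2)\chi(y_1)$ isolating the jump, so that $u^\sharp$ falls to the classical whole-plane log-Lipschitz estimate as a black box, and all the boundary analysis is confined to the explicit tensor-product piece $u^\flat$. Your route makes the source of the singular correction $\tht(x)(f(x)-f(x'))$ more transparent (it is exactly the boundary trace of $\bar\tht_\flat$ after integration by parts in $y_1$), at the cost of treating $u^\flat_1$ by a separate, somewhat ad hoc argument; the paper's region-based splitting handles $u_1$ and $u_2$ in a more parallel fashion. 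Two minor points: your description of the ``mean-value pieces'' in the $u^\sharp$ argument is slightly garbled (if one subtracts $g(x)$ from both kernels there is no leftover term at all), and your sketch for $u^\flat_1$ uses bounds on $D_x(y)$ rather than on $D_x(y)-D_{x'}(y)$ in the far region --- but the missing step (a mean-value bound $|D_x-D_{x'}|\lesssim h|x-y|^{-2-\alp}$, then absorbing the $\alp$-singularity with $|\tht(0,y_2)-\tht(0,x_2)|\lesssim|x-y|^\alp$) is routine and yields the same $h\log(1/h)$.
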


\begin{remark}\label{rmk:reg_euler}
    One can have the same result for $\alpha=0$, if we replace \eqref{eq:U2-def2} and \eqref{eq:C-alp-def2} by        \begin{equation*}
        \begin{split}
            f(x) := -2\int_{\bbR} \log(|x-(0,z)|) \varphi(0,z) \,\ud z, \qquad             C_\alp(x) :=  2 \int_{\bbR} \frac{1}{ 1+z^2}  \varphi(0,x_2+x_1z) \,\ud z,
        \end{split}
    \end{equation*}   respectively.
\end{remark}


\subsection{Velocity $L^2$ estimates}

We prove two $L^2$-type estimates for the velocity. 
\begin{lemma}\label{lem:L2}
	Let $g \in L^2(\bbR^2_{+})$ be compactly supported, and let $v = -\nb^\perp (-\lap)^{-1+\alp/2}g$. Then, we have \begin{equation}\label{eq:u2}
		\begin{split}
			\nrm{v_2}_{L^2} \le C\nrm{g}_{L^2} 
		\end{split}
	\end{equation} and \begin{equation}\label{eq:u1}
		\begin{split}
			\nrm{ x_{1}^{\alp-1} v_1}_{L^2} \le C\nrm{g}_{L^2} 
		\end{split}
	\end{equation} for some $C>0$ depending only on the diameter of the support of $g$. 
\end{lemma}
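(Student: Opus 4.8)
\emph{Proof strategy.} The plan is to reduce both estimates to the corresponding whole-plane Biot--Savart operator by exploiting the image-charge structure of \eqref{eq:Biot-Savart-aSQG}, and then to treat $v_2$ by a Fourier-multiplier argument (the low-frequency divergence being absorbed by the compact support of $g$) and $v_1$ by a weighted Schur test that builds in the vanishing of the normal velocity on $\{x_1=0\}$.

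First I would record two reflection identities. Let $w=-\nb^\perp(-\lap)^{-1+\alpha/2}(g\,\mathbf 1_{\bbR^2_+})$ denote the velocity that $g$ (extended by zero) generates in all of $\bbR^2$, so that $w_1(x)=-\int_{\bbR^2_+}\frac{x_2-y_2}{|x-y|^{2+\alpha}}g(y)\,\ud y$ and $w_2(x)=\int_{\bbR^2_+}\frac{x_1-y_1}{|x-y|^{2+\alpha}}g(y)\,\ud y$. Using $|x-\tld y|=|\tld x-y|$, $(x-\tld y)_2=(\tld x-y)_2=x_2-y_2$ and $(x-\tld y)_1=-(\tld x-y)_1=x_1+y_1$, the Biot--Savart law \eqref{eq:Biot-Savart-aSQG} gives
\[
v_1(x)=w_1(x)-w_1(\tld x),\qquad v_2(x)=w_2(x)+w_2(\tld x),
\]
i.e.\ $v_1$ is (twice) the part of $w_1$ odd in $x_1$ and $v_2$ is (twice) the part of $w_2$ even in $x_1$; in particular $v_1\equiv0$ on $\{x_1=0\}$. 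Hence $\|v_2\|_{L^2(\bbR^2_+)}^2\le2\|w_2\|_{L^2(\bbR^2)}^2$, and \eqref{eq:u2} follows once $\|w_2\|_{L^2(\bbR^2)}\lesssim\|g\|_{L^2}$. On the Fourier side $|\wht{w_2}(\xi)|\le C|\xi|^{\alpha-1}|\wht g(\xi)|$; for $|\xi|\ge1$ this is a bounded multiplier, while for $|\xi|<1$ we bound $\wht g$ by $\|g\|_{L^1}\le|\supp g|^{1/2}\|g\|_{L^2}$ and use $\int_{|\xi|<1}|\xi|^{2\alpha-2}\,\ud\xi<\infty$ (here $\alpha>0$ is used), yielding $\|w_2\|_{L^2}^2\le C(1+|\supp g|)\|g\|_{L^2}^2$. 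This is where the dependence on the diameter of $\supp g$ enters.

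For \eqref{eq:u1} I would work with the explicit kernel: $v_1(x)=\int_{\bbR^2_+}k(x,y)g(y)\,\ud y$ with $k(x,y)=(x_2-y_2)\bigl(|x-\tld y|^{-2-\alpha}-|x-y|^{-2-\alpha}\bigr)$. From $|x-\tld y|^2=|x-y|^2+4x_1y_1$ and the mean value theorem one gets $\bigl||x-\tld y|^{-2-\alpha}-|x-y|^{-2-\alpha}\bigr|\le C\min\bigl(|x-y|^{-2-\alpha},\,x_1y_1|x-y|^{-4-\alpha}\bigr)$, and interpolating these two bounds together with $|x_2-y_2|\le|x-y|$ gives, for every $s\in[0,1]$,
\[
|k(x,y)|\le C\,\frac{(x_1y_1)^{s}}{|x-y|^{1+\alpha+2s}},
\qquad\text{so}\qquad
x_1^{\alpha-1}|k(x,y)|\le C\,\frac{x_1^{\alpha-1+s}\,y_1^{s}}{|x-y|^{1+\alpha+2s}}.
\]
I would then apply the (asymmetric) Schur test to the positive kernel $x_1^{\alpha-1}|k(x,y)|$ with weights $p(x)=x_1^{a}$ and $q(y)=y_1^{a}$: the substitutions $y=(x_1\eta_1,x_2+x_1\eta_2)$ and $x=(y_1\xi_1,y_2+y_1\xi_2)$ show $\int_{\bbR^2_+}\frac{x_1^{\alpha-1+s}y_1^{s+a}}{|x-y|^{1+\alpha+2s}}\,\ud y=C_1(s,a,\alpha)\,x_1^{a}$ and $\int_{\bbR^2_+}\frac{x_1^{\alpha-1+s+a}y_1^{s}}{|x-y|^{1+\alpha+2s}}\,\ud x=C_2(s,a,\alpha)\,y_1^{a}$, with $C_1,C_2<\infty$ precisely when $\max(0,\tfrac12-\alpha)<s<\tfrac12-\tfrac\alpha2$ and $-\alpha-s<a<s+\alpha-1$ --- and these ranges are nonempty for every $\alpha\in(0,1)$. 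The Schur lemma then gives $\|x_1^{\alpha-1}v_1\|_{L^2}\le C\sqrt{C_1C_2}\,\|g\|_{L^2}$, in fact uniformly in $\supp g$.

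The main obstacle is \eqref{eq:u1}: the weight $x_1^{\alpha-1}$ is singular at the boundary and cannot be absorbed when $\supp g$ reaches down to $\{x_1=0\}$. What saves the argument is the two-sided kernel bound with the gain $(x_1y_1)^{s}$ --- the quantitative form of $v_1|_{x_1=0}=0$ --- combined with choosing the Schur weights $p=q=x_1^{a}$ to match the homogeneity of $\frac{x_1^{\alpha-1+s}y_1^{s}}{|x-y|^{1+\alpha+2s}}$, so that the boundary behaviour is balanced on the two sides of the test. (Alternatively, one may deduce \eqref{eq:u1} from $v_1=w_1(x)-w_1(\tld x)$ using $\||D_1|^{1-\alpha}w_1\|_{L^2(\bbR^2)}\lesssim\|g\|_{L^2}$, immediate since the symbol $|\xi_1|^{1-\alpha}|\xi_2||\xi|^{\alpha-2}$ is bounded, together with the one-dimensional fractional Hardy inequality $\int_0^\infty t^{2\alpha-2}|F(t)|^2\,\ud t\lesssim\||D|^{1-\alpha}F\|_{L^2(\bbR)}^2$ applied fibrewise to the odd function $F=w_1(\cdot,x_2)-w_1(-\,\cdot\,,x_2)$.)
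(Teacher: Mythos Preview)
Your proof is correct, but it takes a different route from the paper's in both parts.

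For \eqref{eq:u2}, the paper works entirely in physical space: it simply bounds the kernel $\frac{x_1-y_1}{|x-y|^{2+\alpha}}$ by $|x-y|^{-1-\alpha}$ and applies Young's inequality $\|K\ast g\|_{L^2}\le\|K\|_{L^1}\|g\|_{L^2}$, using the compact support to make the $L^1$ norm of the truncated kernel finite. Your Fourier argument (bounded multiplier for $|\xi|\ge1$, the $L^1\hookrightarrow L^\infty$ bound on $\hat g$ for $|\xi|<1$) is equally valid and makes the role of the compact support just as transparent; it is slightly more machinery for the same conclusion.

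For \eqref{eq:u1}, the paper uses the specific mean-value bound
\[
\left|\frac{x_2-y_2}{|x-y|^{2+\alpha}}-\frac{x_2-y_2}{|\tilde x-y|^{2+\alpha}}\right|\le\frac{Cx_1}{|x-y|^{1+\alpha}|\tilde x-y|},
\]
then applies Minkowski together with the scaling change of variables $y=(x_1z_1,x_2+x_1z_2)$, reducing everything to checking that the single integral $\int_{\bbR^2_+}z_1^{-1/2}|(1-z_1,z_2)|^{-1-\alpha}|(1+z_1,z_2)|^{-1}\,\ud z$ is finite. Your argument instead interpolates to the family $|k(x,y)|\lesssim(x_1y_1)^s|x-y|^{-1-\alpha-2s}$ and runs a two-sided Schur test with power weights, which requires identifying a nonempty window of parameters $(s,a)$. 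Both approaches exploit the same homogeneity (indeed you use the very same change of variables inside the Schur computation), but the paper's route avoids the parameter search and gets there in one line; your Schur argument, on the other hand, packages the boundary cancellation more symmetrically in $x$ and $y$ and yields a bound independent of $\supp g$, which the paper's version of \eqref{eq:u1} does as well though this is not emphasized. Your alternative via the one-dimensional fractional Hardy inequality for odd functions is also legitimate, but note that for $1-\alpha>\tfrac12$ you are genuinely relying on the oddness of $F$ to push past the usual $s<\tfrac12$ barrier for Hardy on the half-line.
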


\begin{proof}
	We first prove \eqref{eq:u2}: using the Biot--Savart law, we bound	\begin{equation*}
		\begin{aligned}
			\| v_2 \|_{L^2} &\leq \left\| \int_{\bbR^2} \frac{(x_1-y_1)}{|x-y|^{2+\alp}} g(y) \, \ud y \right\|_{L^2} \leq \| g \|_{L^2} \int_{B(0;R)} \frac {1}{|x|^{1+\alpha}} \,\ud x \leq C\| g \|_{L^2}.
		\end{aligned}
	\end{equation*} 
	
	Next, to prove \eqref{eq:u1}, we begin with		\begin{equation*}
		\begin{aligned}
			\| x_1^{-(1-\alpha)}v_{1} \|_{L^2} &= \left\| x_1^{-(1-\alpha)} \int_{\bbR^2_{+}} \left( \frac{x_2-y_2}{|x-y|^{2+\alp}} - \frac{x_2-y_2}{|\tilde{x}-y|^{2+\alp}} \right) g(y) \, \ud y \right\|_{L^2}.
		\end{aligned}
	\end{equation*}		Recalling \eqref{eq:mvt}, we write		\begin{equation*}
		\begin{aligned}
			\left| \frac{x_2-y_2}{|x-y|^{2+\alp}} - \frac{x_2-y_2}{|\tilde{x}-y|^{2+\alp}} \right| \leq\frac { Cx_1(x_1+y_1)(x_2-y_2)|\tilde{x}-y|^{\alpha}}{|x-y|^{2+\alp}|\tilde{x}-y|^{2+\alp}} \leq \frac {Cx_1}{|x-y|^{1+\alp}|\tilde{x}-y|} .
		\end{aligned}
	\end{equation*}		Thus, it follows		\begin{equation*}
		\begin{aligned}
			\| x_1^{-(1-\alpha)}v_{1} \|_{L^2} &\leq \left\| x_1^{\alpha} \int_{\bbR^2_{+}} \frac {|g(y)|}{|x-y|^{1+\alp}|\tilde{x}-y|} \, \ud y \right\|_{L^2}.
		\end{aligned}
	\end{equation*}	Using the Minkowski integral inequality and applying the change of variables $y = (x_1z_1,x_2 + x_1z_2)$, we obtain	\begin{equation*}
		\begin{aligned}
			\left\| x_1^{\alpha} \int_{\bbR^2_{+}} \frac {|g(y)|}{|x-y|^{1+\alp}|\tilde{x}-y|} \, \ud y \right\|_{L^2} &= \left\| \int_{\bbR^2_{+}} \frac {|g(x_1z_1,x_2 + x_1z_2)|}{|(1-z_1,z_2)|^{1+\alpha}|(1+z_1,z_2)|} \,\ud z \right\|_{L^2} \\
			&\leq \| g\|_{L^2} \int_{\bbR^2} \frac {1}{\sqrt{z_1} |(1-z_1,z_2)|^{1+\alpha}|(1+z_1,z_2)|} \,\ud z \\
			&\leq C \| g\|_{L^2}.
		\end{aligned}
	\end{equation*} This finishes the proof. 
\end{proof}

\section{Local well-posedness}\label{sec:lwp}

In this section, we shall prove Theorem \ref{thm_loc1}. We fix some $0<\alp\le 1/2$ and $\alp\le\bt\le1-\alp$. We proceed in several steps. 

\medskip

\noindent \textbf{1. A priori estimates}. We take $\tht_0 \in X^{\bt}$ and assume for simplicity that $\supp \tht_0 \subset B(0;1)$. To begin with, we have $\nrm{u}_{L^\infty} $ finite so that the support of $\tht$ will be contained in a ball of radius $\lesssim 1$. In particular, on the support of $\tht$, we have $x_1 \lesssim 1$. Then, we consider the derivative estimates for the (hypothetical) solution to \eqref{eq:SQG}.

We first estimate the derivative in $x_2$: 
\begin{equation}\label{eq:rd2}
	\begin{split}
		\rd_t (\rd_2\tht) + u\cdot\nb (\rd_2\tht) = - \rd_2 u \cdot \nb \tht. 
	\end{split}
\end{equation} We note \begin{equation*}
	\begin{split}
		\frac{\ud}{\ud t} \nrm{\rd_2\tht}_{L^\infty} \le \nrm{\rd_2 u_1\rd_1\tht}_{L^\infty} + \nrm{\rd_2u_2\rd_2\tht}_{L^\infty} .
	\end{split}
\end{equation*} Observe that $\rd_2u_1 \in C^{1-\alp} $ using that $\rd_2\tht$ is bounded. Since $\rd_2u_1$ vanishes on the boundary, we have that $|x_1^{\alp-1}\rd_2u_1|$ is bounded. This allows \begin{equation*}
	\begin{split}
		\nrm{\rd_2 u_1\rd_1\tht}_{L^\infty} \lesssim \nrm{x_1^{\alp-1}\rd_2 u_1}_{L^\infty}  \nrm{ x_1^{1-\alp} \rd_1\tht}_{L^\infty} \lesssim \nrm{ \rd_2\tht}_{L^\infty}   \nrm{ x_1^{1-\alp} \rd_1\tht}_{L^\infty}.
	\end{split}
\end{equation*}  On the other hand, $\nrm{\rd_2u_2}_{C^{1-\alp}} \lesssim \nrm{\partial_2\tht}_{L^{\infty}}$. Hence \begin{equation}\label{eq:2tht}
	\begin{split}
		\frac{\ud}{\ud t} \nrm{\rd_2\tht}_{L^\infty} \lesssim ( \nrm{ x_1^{1-\alpha} \rd_1\tht}_{L^\infty}+ \nrm{\partial_2\tht}_{L^{\infty}}) \nrm{ \rd_2\tht}_{L^\infty} . 
	\end{split}
\end{equation}

Now we consider 
\begin{equation}\label{eq:rd1}
	\begin{split}
		\rd_t (\rd_1\tht) + u\cdot\nb (\rd_1\tht) = - \rd_1 u \cdot \nb \tht. 
	\end{split}
\end{equation}
Multiplying both terms by $x_1^{1-\beta}$ gives		\begin{equation*}
	\begin{split}
		\rd_t (x_1^{1-\beta}\rd_1\tht) + u\cdot\nb (x_1^{1-\beta}\rd_1\tht) = - x_1^{1-\beta}\rd_1 u \cdot \nb \tht + (1-\beta) u_1 x_1^{-\beta}\rd_1\tht . 
	\end{split}
\end{equation*} The last term is easy to control, using that $u_1$ vanishes on the boundary: $$\nrm{  u_1 x_1^{-\beta}\rd_1\tht}_{L^\infty} \le \nrm{\rd_1u_1}_{L^\infty} \nrm{ x_1^{1-\beta}\rd_1\tht}_{L^{\infty}}.$$ Note \begin{equation*}
	\begin{split}
		\nrm{ x_1^{1-\beta}\rd_1 u_1\rd_1\tht}_{L^\infty} \le \nrm{\rd_1u_1}_{L^\infty} \nrm{ x_1^{1-\beta}\rd_1\tht}_{L^\infty}
	\end{split}
\end{equation*}
and
\begin{equation*}
	\begin{split}
		\nrm{ x_1^{1-\beta}\rd_1 u_2\rd_2\tht}_{L^\infty} &\le \nrm{ x_1^{1-\beta}\rd_1 u_2}_{L^\infty} \nrm{\rd_2\tht}_{L^\infty} \\
		&\lesssim (\| x_1^{1-\alpha} \partial_1 \tht \|_{L^{\infty}} + \| \partial_2 \tht \|_{L^{\infty}})\| \partial_2 \tht \|_{L^{\infty}} + \| x_1^{1-\beta-\alpha} \tht \|_{L^{\infty}} \nrm{\rd_2\tht}_{L^\infty}.
	\end{split}
\end{equation*} We used \eqref{eq:u-logLip} and \eqref{eq:U2-est-unbounded} in the last inequality. It is necessary that $\alp \leq \frac {1}{2}$ to hold $\beta \geq\alp$ and $1-\alpha-\beta \geq 0$ simultaneously. 

Combining the inequalities, we obtain \begin{equation}\label{eq:energy}
	\begin{split}
		\frac{\ud}{\ud t} ( \nrm{x_1^{1-\bt} \rd_1\tht}_{L^\infty} + \nrm{\rd_2\tht}_{L^\infty})  \lesssim  ( \nrm{x_1^{1-\alpha} \rd_1\tht}_{L^\infty} + \nrm{\rd_2\tht}_{L^\infty})  ( \nrm{\tht}_{L^\infty} + \nrm{x_1^{1-\bt} \rd_1\tht}_{L^\infty} + \nrm{\rd_2\tht}_{L^\infty}). 
	\end{split}
\end{equation}

Lastly, we note that $\nrm{\tht}_{L^\infty}=\nrm{\tht_0}_{L^\infty}$ and $\nrm{\tht}_{C^\bt} \lesssim \nrm{\tht}_{L^\infty} + \nrm{x_1^{1-\bt} \rd_1\tht}_{L^\infty} + \nrm{\rd_2\tht}_{L^\infty}$. This gives that there exists $T = T(\nrm{\tht_0}_{X^{\alpha}},|\supp \tht_0|)>0$ such that on $[0,T]$, $\nrm{\tht(t,\cdot)}_{X^{\bt}} \lesssim \nrm{\tht_0}_{X^{\bt}}.$

\medskip 

Now we collect some estimates on the flow map: On the time interval $[0,T]$, we can find a unique solution to the ODE for any $x\in \bbR^2_+$: \begin{equation*}
	\frac{\ud}{\ud t} \Phi(t,x)= u(t,\Phi(t,x)), \qquad \Phi(0,x)=x. 
\end{equation*} While this is not trivial as $u$ is not uniformly Lipschitz on the half plane, the point is that $u_1$ is uniformly Lipschitz, which gives together with  $u_1(t,(0,x_2))=0$ that the first component of the flow map satisfies the estimate \begin{equation*}
	\begin{split}
		\left| \frac{\ud}{\ud t} \Phi_1(t,x) \right| \le C\| \tht \|_{X^{\alpha}} \Phi_1(t,x)
	\end{split}
\end{equation*} by the mean value theorem. This gives in particular that \begin{equation}\label{eq:Phi-1-est}
	\begin{split}
		x_1 \exp(-C\| \tht_0 \|_{X^\alpha}t) \le	\Phi_1(t,x) \le x_1 \exp(C\| \tht_0 \|_{X^\alpha}t).
	\end{split}
\end{equation} Since $u$ is uniformly Lipschitz away from the boundary $\{ x_1=0\} $, this shows that the flow map $\Phi$ is well-defined on $[0,T]$. Moreover, Lemma~\ref{lem:vel} gives for any fixed $x \in \bbR^2_{+}$ that $\sup_{t \in [0,T]}|\Phi(t,x) - \Phi(t,x')| \to 0$ as $x' \to x$.

It is not difficult to show that $\Phi$ is differentiable in $x$ almost everywhere, with the following a priori bound for $\nabla\Phi(t,x)$:     \begin{equation}\label{eq:flow-diff}
	\begin{split}
		|\nabla \Phi(t,x)| \leq e^{C(x_1^{-\alpha}\tht_0(x) + \| \tht_0 \|_{X^{\alpha}}) t}.
	\end{split}
\end{equation} For the proof, fix some $x=\bbR^2_{+}$ and let $x'=(x'_1,x_2)$ with $x'_1> \frac{x_1}{2}$. Then, we have        \begin{equation*}
	\begin{split}
		\frac{\ud}{\ud t} \frac{\Phi(t,x) - \Phi(t,x')}{x_1-x'_1} = \frac{u(t,\Phi(t,x)) - u(t,\Phi(t,x'))}{x_1-x'_1}.
	\end{split}
\end{equation*}     Note that       \begin{equation*}
	\begin{gathered}
		\frac{u(t,\Phi(t,x)) - u(t,\Phi(t,x'))}{x_1-x'_1} = \frac{u(t,\Phi(t,x)) - u(t,\Phi_1(t,x'),\Phi_2(t,x))}{\Phi_1(t,x)-\Phi_1(t,x')} \frac{\Phi_1(t,x)-\Phi_1(t,x')}{x_1-x'_1} \\
		+ \frac{u(t,\Phi_1(t,x'),\Phi_2(t,x)) - u(t,\Phi(t,x'))}{\Phi_2(t,x)-\Phi_2(t,x')} \frac{\Phi_2(t,x)-\Phi_2(t,x')}{x_1-x'_1}.
	\end{gathered}
\end{equation*}     On the other hand, we consider the following linear ODE system:      \begin{equation}\label{eq:ODE}
	\begin{gathered}
		\frac{\ud}{\ud t} \partial_1 \Phi(t,x) = \begin{pmatrix}
			\partial_1u_1(t,\Phi(t,x)) & \partial_2 u_1(t,\Phi(t,x)) \\
			\partial_1u_2(t,\Phi(t,x)) & \partial_2 u_2(t,\Phi(t,x)) \end{pmatrix} \partial_1\Phi(t,x).
	\end{gathered}
\end{equation}     It is obvious that there exists a unique solution $\partial_1\Phi(t,x) \in \operatorname{Lip}(0,T)$ with the initial data $\partial_1\Phi(0,x) = e_1$. Since \eqref{eq:Phi-1-est} implies $\inf_{t \in [0,T]} \Phi_1(t,x) \geq \frac{x_1}{C}$ for some $C>0$, not depending on the choice of $x$, we can deduce  with Lemma~\ref{lem:vel}     \begin{equation}\label{eq:nb_u}
	\begin{split}
		\sup_{t \in [0,T]} | \nabla u(t,\Phi(t,x)) | \leq C(x_1^{-\alpha}\tht_0(x) + \| \tht_0 \|_{X^{\alpha}}).
	\end{split}
\end{equation}      Thus, it follows       \begin{equation*}
	\begin{split}
		|\partial_1\Phi(t,x)| \leq e^{C(x_1^{-\alpha}\tht_0(x) + \| \tht_0 \|_{X^{\alpha}}) t}.
	\end{split}
\end{equation*}     Moreover, using     \begin{multline*}
	\lim_{x'_1 \to x_1} \Bigg(\left| \partial_1 u(t,\Phi(t,x)) - \frac{u(t,\Phi(t,x)) - u(t,\Phi_1(t,x'),\Phi_2(t,x))}{\Phi_1(t,x)-\Phi_1(t,x')} \right| \\
	+ \left| \partial_2 u(t,\Phi(t,x)) - \frac{u(t,\Phi_1(t,x'),\Phi_2(t,x)) - u(t,\Phi(t,x'))}{\Phi_2(t,x)-\Phi_2(t,x')} \right| \Bigg) = 0
\end{multline*}     and the uniform bound \eqref{eq:nb_u}, we can have by Lebesgue's dominated convergence theorem that $$\lim_{x'_1 \to x_1} \frac{\Phi(t,x)-\Phi(t,x')}{x_1-x'_1} = \partial_1 \Phi(t,x)$$ for a.e. $x$ and $t$. Similarly, one can repeat the above process for $x'=(x_1,x'_2)$ to obtain           \begin{equation*}
	\begin{split}
		\lim_{x'_2 \to x_2} \frac{\Phi(t,x)-\Phi(t,x')}{x_2-x'_2} = \partial_2 \Phi(t,x), \qquad |\partial_2\Phi(t,x)| \leq e^{C(x_1^{-\alpha}\tht_0(x) + \| \tht_0 \|_{X^{\alpha}}) t}.
	\end{split}
\end{equation*} 
The details are provided by the following lemma: 
\begin{lemma}
	Let $A(t)$ and $B(t)$ be $2 \times 2$ matrices such that        \begin{equation*}
		A(t) := \begin{pmatrix}
			a_{11}(t) &a_{12}(t) \\
			a_{21}(t) &-a_{11}(t)
		\end{pmatrix}, \qquad B^{\varepsilon}(t) := A(t) +        \begin{pmatrix}
			b_{11}(t) &b_{12}(t) \\
			b_{21}(t) &b_{22}(t)
		\end{pmatrix}, \qquad \int_0^T |b_{ij}(t)| \,\ud t \leq \varepsilon
	\end{equation*}     for some $T>0$ and $\varepsilon > 0$. Let $y(t)$ and $y^\varepsilon(t)$ be solutions to the linear ODE systems     \begin{equation*}
		\frac{\ud}{\ud t}y(t) = A(t)y(t), \qquad \frac{\ud}{\ud t}y^\varepsilon(t) = B^{\varepsilon}(t)y^\varepsilon(t), \qquad y(0) = y_0, \qquad y^\varepsilon(0) = y^\varepsilon_0,
	\end{equation*}     where $|y_0 - y^\varepsilon_0| \leq \varepsilon$. Then, there exists a constant $C>0$ not depending on $\varepsilon$ such that       \begin{equation*}
		\sup_{t \in [0,T]}|y(t) - y^\varepsilon(t)| \leq C\varepsilon \left(1+\sup_{t \in [0,T]} |y(t)| \right) \exp \left(C\varepsilon + \int_0^T |A(t)| \,\ud t\right).
	\end{equation*}
\end{lemma}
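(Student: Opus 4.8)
The plan is to treat this as a routine Gronwall stability estimate for a linear ODE, regarding $y^\varepsilon$ as a perturbation of $y$. Set $w(t) := y^\varepsilon(t) - y(t)$ and let $b(t) := B^\varepsilon(t) - A(t)$ be the matrix with entries $b_{ij}(t)$. Subtracting the two equations,
\begin{equation*}
	\frac{\ud}{\ud t} w(t) = A(t)\,w(t) + b(t)\,y^\varepsilon(t), \qquad w(0) = y^\varepsilon_0 - y_0,
\end{equation*}
so $|w(0)| \le \varepsilon$ and, in integral form,
\begin{equation*}
	|w(t)| \le \varepsilon + \int_0^t |A(s)|\,|w(s)| \,\ud s + \int_0^t |b(s)|\,|y^\varepsilon(s)| \,\ud s ,
\end{equation*}
where $|\cdot|$ is any fixed norm on $2\times 2$ matrices and vectors; converting the entrywise hypotheses $\int_0^T |b_{ij}| \le \varepsilon$ into $\int_0^T |b(s)|\,\ud s \le C_0 \varepsilon$ costs only an absolute constant $C_0 \ge 1$.

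Next I would bound the last integral by splitting $y^\varepsilon = y + w$. Since $|y^\varepsilon(s)| \le \sup_{t\in[0,T]} |y(t)| + |w(s)|$ and $\int_0^T |b| \le C_0\varepsilon$,
\begin{equation*}
	\int_0^t |b(s)|\,|y^\varepsilon(s)| \,\ud s \le C_0\varepsilon \sup_{t\in[0,T]} |y(t)| + \int_0^t |b(s)|\,|w(s)| \,\ud s .
\end{equation*}
Inserting this gives
\begin{equation*}
	|w(t)| \le \varepsilon\Big( 1 + C_0 \sup_{t\in[0,T]} |y(t)| \Big) + \int_0^t \big( |A(s)| + |b(s)| \big)\,|w(s)| \,\ud s ,
\end{equation*}
and the integral form of Gronwall's inequality yields
\begin{equation*}
	\sup_{t\in[0,T]} |w(t)| \le \varepsilon\Big( 1 + C_0 \sup_{t\in[0,T]} |y(t)| \Big) \exp\!\Big( \int_0^T |A(s)| \,\ud s + C_0\varepsilon \Big) ,
\end{equation*}
which is the claimed bound after absorbing $C_0$ into a single constant $C$ (note $1 + C_0 M \le C_0(1+M)$). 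The resulting $C$ depends only on the choice of norm, in particular not on $\varepsilon$, nor on $T$ beyond the explicit factor $\int_0^T |A|$.

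I do not expect a genuine obstacle here; the only points deserving a word of care are bookkeeping. First, since $A$ and $b$ are merely assumed integrable in $t$, the solutions $y, y^\varepsilon$ should be understood as absolutely continuous functions satisfying the corresponding integral equations, which is exactly the setting in which the integral version of Gronwall applies, so the argument goes through unchanged. Second, the traceless form of $A$ plays no role in this proof: it merely records that in the application $A$ is the Jacobian $\nabla u$ of the divergence-free velocity. One could alternatively route the estimate through the fundamental matrix $\Phi_A$ of $\ud y/\ud t = A y$ (where $\det \Phi_A \equiv 1$ would then control $\Phi_A^{-1}$), but the direct Gronwall argument above already produces the stated exponent with the correct single power of $\int_0^T |A|$, so I would keep the direct route.
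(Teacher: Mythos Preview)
Your proof is correct and follows essentially the same route as the paper: a direct Gronwall argument on the difference $y-y^\varepsilon$. The only cosmetic difference is that the paper writes the difference equation as $\frac{\ud}{\ud t}(y-y^\varepsilon) = B^\varepsilon(y-y^\varepsilon) - (B^\varepsilon - A)y$, so the forcing term already involves $y$ rather than $y^\varepsilon$; this avoids your splitting $y^\varepsilon = y + w$, but both computations land on the identical pre-Gronwall inequality.
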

\begin{proof}
	We have     \begin{equation*}
		\frac{\ud}{\ud t}(y(t) - y^\varepsilon(t)) = B^{\varepsilon}(t)(y(t) - y^\varepsilon(t)) - (B(t) - A(t)) y(t).
	\end{equation*}     Integrating the both sides gives     \begin{equation*}
		y(t) - y^\varepsilon(t) = y_0 - y^\varepsilon_0 + \int_0^t B^{\varepsilon}(\tau)(y(\tau) - y^\varepsilon(\tau)) \,\ud \tau - \int_0^t (B^{\varepsilon}(\tau) - A(\tau)) y(\tau) \,\ud \tau.
	\end{equation*}     Thus, we have     \begin{equation*}
		|y(t) - y^\varepsilon(t)| \leq \varepsilon  + C\varepsilon \sup_{\tau \in [0,t]} |y(\tau)| + \int_0^t \left(|A(\tau)|+|(B^{\varepsilon}-A)(\tau)|\right)|y(\tau) - y^\varepsilon(\tau)| \,\ud \tau.
	\end{equation*}     Using Gr\"onwall's inequality, we can complete the proof.
\end{proof}

From the above estimates, we have that $\tht(t,x)$ is differentiable in $x$ almost everywhere. Moreover, for any fixed $x \in \bbR^2_{+}$, we can show that $$\left| \tht(t,\Phi(t,x)) - \tht(t',\Phi(t,x))\right| = \left| \tht(t',\Phi(t',x)) - \tht(t',\Phi(t,x)) \right| \leq C \sup_{t \in [0,T]} \| \tht(t,\cdot) \|_{X^{\beta}}^2 |t-t'|$$ for all $t,t' \in (0,T)$. Therefore, the solution is differentiable in $t$ almost everywhere, and $$\theta \in \mathrm{Lip}(0,T;L^{\infty}) \cap L^{\infty}(0,T;X^{\beta}).$$ 
Also note that $$\partial_j (\theta(t,\Phi(t,x))) = \nabla \tht(t,\Phi(t,x)) \partial_j \Phi(t,x) = \partial_j\theta_0(x)$$ for all $x \in \bbR^2_{+}$. The continuity of $\partial_j \tht_0$ is assumed in $\bbR^2_{+}$. We recall \eqref{eq:ODE} with the continuity of $\nabla u_1$ and $\partial_2 u_2$. Since $$\partial_1 u_2(x) = \partial_1 U_2 + \int_{\bbR^2_+} \left[  \frac{x_1-y_1}{|x-y|^{2+\alp}} + \frac{x_1+y_1}{|\tilde{x}-y|^{2+\alp}}  \right] \rd_1 \tht(y) \, \ud y,$$ where the terms on the right-hand side are continuous in $x$, we obtain that $\nabla \Phi(t,x)$ is continuous in $\bbR^2_{+}$. Hence, $\nabla \tht(t,x)$ is continuous in $x$ except the case of $\nabla \Phi(t,x) = 0$. This finishes the proof of a priori estimates.

\medskip

\noindent \textbf{2. Uniqueness}. Now, we show that the solution in $L^\infty_t X^{\bt}$ is unique. 
Suppose that $\tht$ and $\tht'$ are solutions to \eqref{eq:SQG} with the same initial data $\tht_0$ defined on the time interval $[0,\delta]$ for some $\dlt>0$. From $$\partial_t(\tht -\tht') + u'\cdot \nabla (\tht - \tht') = -(u - u') \cdot \nabla \tht,$$ we have $$\frac {\ud}{\ud t} \int_{\bbR^2_{+}} |\tht - \tht'|^2 \,\ud x = -\int_{\bbR^2_{+}} (u_1 - u'_1) \partial_1 \tht (\tht - \tht') \,\ud x -\int_{\bbR^2_{+}} (u_2 - u'_2) \partial_2 \tht (\tht - \tht') \,\ud x.$$ Note that		\begin{equation*}
	\begin{aligned}
		\left| -\int_{\bbR^2_{+}} (u_2 - u'_2) \partial_2 \tht (\tht - \tht') \,\ud x \right| &\leq \| \partial_2 \tht \|_{L^{\infty}} \| u_2 - u'_2 \|_{L^2} \| \tht - \tht' \|_{L^2} \leq C\| \partial_2 \tht \|_{L^{\infty}} \| \tht - \tht' \|_{L^2}^2,
	\end{aligned}
\end{equation*}	where we have applied \eqref{eq:u2} with $g=\tht-\tht'$. On the other hand, we estimate the other term as follows:	\begin{equation*}
	\begin{aligned}
		\left| -\int_{\bbR^2_{+}} (u_1 - u'_1) \partial_1 \tht (\tht - \tht') \,\ud x \right| &\leq \| x_1^{1-\alpha} \partial_1 \tht \|_{L^{\infty}} \| x_1^{-(1-\alpha)} (u_1 - u'_1) \|_{L^2} \| \tht - \tht' \|_{L^2} \\
		&\leq C \| x_1^{1-\alpha} \partial_1 \tht \|_{L^{\infty}} \| \tht - \tht' \|_{L^2}^{2}, 
	\end{aligned}
\end{equation*}	 where we have used \eqref{eq:u1} this time. 
Combining the above estimates gives that $$\frac {\ud}{\ud t} \int_{\bbR^2_{+}} |\tht - \tht'|^2 \,\ud x \leq C(\| x_1^{1-\alpha} \partial_1\tht \|_{L^{\infty}} + \| \partial_2 \tht \|_{L^{\infty}}) \int_{\bbR^2_{+}} |\tht - \tht'|^2 \,\ud x.$$ Since $\bt\ge\alp$, $\tht = \tht'$ must hold on the time interval $[0,\delta]$ by Gr\"onwall's inequality. This completes the proof of uniqueness. 

\medskip

\noindent \textbf{3. Existence}. The existence of the solution can be proved by an iteration argument. We are going to define the sequence of functions $\tht^{(n)}$ which is uniformly bounded in $L^\infty([0,T];X^{\bt})$ where $T>0$ is determined only by $\nrm{\tht_0}_{X^{\bt}}$. To this end, we first set $\tht^{(0)} \equiv \tht_0$ for all $t$ and consider \begin{equation}\label{eq:iteration}
	\left\{
	\begin{aligned}
		\rd_t \tht^{(n+1)} + u^{(n)} \cdot \nb \tht^{(n+1)} = 0,& \\
		u^{(n)} = \nb^\perp(-\lap)^{-1+\alp/2}\tht^{(n)}, & \\
		\tht^{(n+1)}(t=0)=\tht_{0}. & 
	\end{aligned}
	\right.
\end{equation}
We can prove the following claims inductively in $n$: there exists some $T>0$ such that for $t \in [0,T]$,\begin{itemize}
	\item the flow map $\Phi^{(n)}$ corresponding to $u^{(n)}$ is well-defined as a homeomorphism of $\overline{\bbR^2_+}$;
	\item $\tht^{(n+1)}(t,\Phi^{(n)}(t,x)) = \tht_{0}(x)$ and $\tht^{(n+1)}(t,x) = \tht_{0} \circ (\Phi^{(n)}_{t})^{-1}(x)$;
	\item $\sup_{t \in [0,T]} \nrm{\tht^{(n+1)}(t,\cdot)}_{X^{\bt}} \le 10 \nrm{\tht_{0}}_{X^{\bt}}$. 
\end{itemize}
The above statements can be proved as follows: assume that we have $\sup_{t \in [0,T]} \nrm{\tht^{(n)}(t,\cdot)}_{X^{\bt}} \le 10 \nrm{\tht_{0}}_{X^{\bt}}$ for some $n\ge0$. Then, $u^{(n)}$ satisfies the bounds in Lemma \ref{lem:vel}. Based on these bounds, one can solve uniquely the following ODE for any $x \in \bbR^2_+$ uniformly in the time interval $[0,T]$: \begin{equation*}
	\frac{d}{dt} \Phi^{(n)}(t,x) = u^{(n)}(t,\Phi^{(n)}(t,x)) , \qquad \Phi^{(n)}(t,x)=x. 
\end{equation*} Furthermore, $\Phi^{(n)}(t,x)$ is differentiable a.e., with the well-defined inverse $(\Phi^{(n)}_{t})^{-1}(x)$ which is again differentiable a.e. Therefore, we can define $\tht^{(n+1)}(t,x) := \tht_{0} \circ (\Phi^{(n)}_{t})^{-1}(x)$, which is easily shown to be a solution to \eqref{eq:iteration}. Then, applying the a priori estimates to $\tht^{(n+1)}$, we can derive \begin{equation*}
	\frac{d}{dt} \nrm{ \tht^{(n+1)} }_{X^{\bt}} \lesssim \nrm{ \tht^{(n)} }_{X^{\bt}}\nrm{ \tht^{(n+1)} }_{X^{\bt}} \lesssim \nrm{\tht_{0}}_{X^{\bt}} \nrm{ \tht^{(n+1)} }_{X^{\bt}}, 
\end{equation*} where the implicit constants are independent of $n$. Therefore, we obtain $\sup_{t \in [0,T]} \nrm{\tht^{(n+1)}(t,\cdot)}_{X^{\bt}} \le 10 \nrm{\tht_{0}}_{X^{\bt}}$ by possibly shrinking $T$ if necessary (but in a way which is independent of $n$). 

Now we can prove that the sequence $\left\{ \tht^{(n)} \right\}_{n\ge0} $ is Cauchy in $L^2$. For this we write $D^{(n)} = \tht^{(n+1)}-\tht^{(n)}$ and $v^{(n)} = u^{(n)}-u^{(n-1)}$ for simplicity where $n\ge1$. We have \begin{equation*}
	\frac{d}{dt} D^{(n)} + u^{(n)}\cdot \nb D^{(n)} = - v^{(n)} \cdot \nb \tht^{(n+1)}. 
\end{equation*} Integrating against $D^{(n)}$ in space and applying Lemma \ref{lem:L2} for $v^{(n)}$ and $D^{(n-1)}$, we obtain that \begin{equation*}
	\frac{d}{dt} \nrm{ D^{(n)} }_{L^2}^{2} \lesssim \nrm{\tht_{0}}_{X^{\bt}} \nrm{ D^{(n-1)} }_{L^2} \nrm{ D^{(n)} }_{L^2}. 
\end{equation*} Since $D^{(n)}(t=0)=0$, we obtain from the above that  \begin{equation*}
	\nrm{ D^{(n)} (t,\cdot) }_{L^2} \lesssim t\nrm{\tht_{0}}_{X^{\bt}} \sup_{t\in [0,T]}\nrm{ D^{(n-1)} (t,\cdot)}_{L^2} 
\end{equation*} so that by shrinking $T$ if necessary to satisfy $T\nrm{\tht_{0}}_{X^{\bt}} \ll 1$, we can inductively prove \begin{equation*}
	\sup_{t\in [0,T]}\nrm{ D^{(n)} (t,\cdot)}_{L^2} \lesssim 2^{-n}
\end{equation*} for all $n\ge1$. 

From the above, we have that for each $t \in [0,T]$, $\tht^{(n)}$ converges in $L^2$ to a function which we denote by $\tht$. Since $C^{\bt}$ is precompact in $C^{0}$, we obtain from the uniform $X^{\bt}$ bound of $\tht^{(n)}$  that actually $\tht \in L^\infty([0,T];C^\bt)$ and $\tht^{(n)}\to\tht$ in $C^\gmm$ for any $0\le \gmm <\bt$. This uniform convergence shows that $\tht \in X^{\bt}$ as well. From this, we obtain uniform convergence $u^{(n)} \to u$ and $\Phi^{(n)} \to \Phi$ for some $u$ and $\Phi$, and $\Phi$ can be shown to be the flow map corresponding to $u$. Lastly, taking the limit $n\to\infty$ in the relation $\tht^{(n+1)}(t,x) = \tht_{0} \circ (\Phi^{(n)}_{t})^{-1}(x)$ gives $\tht(t,x)=\tht_{0} \circ \Phi_t^{-1}(x)$. This shows that $\tht \in L^\infty([0,T];X^{\bt})$ is a solution to \eqref{eq:SQG} with initial data $\tht_{0}$. This finishes the proof of existence. Using $\tht(t,x)=\tht_{0} \circ \Phi_t^{-1}(x)$ and the regularity of the flow, it is not difficult to show at this point that $\tht \in C([0,T];C^{\bt'})$ for any $\bt'<\bt$. 

\medskip

\noindent \textbf{4. Blow-up criteria}. Firstly, we remark that the existence of time $T>0$ is depending on $\| \tht_0 \|_{X^{\alpha}}$, not $\| \tht_0 \|_{X^{\beta}}$, see \eqref{eq:energy}. Recalling the estimate 
\begin{equation}\label{eq:x1tht}
    \frac{\ud}{\ud t} \| x_1^{1-\alpha} \partial_1 \theta \|_{L^{\infty}} \lesssim \| x_1^{1-\alpha} \partial_1 \theta \|_{L^{\infty}} \| \partial_2 \tht \|_{L^{\infty}} + \| \tht \|_{L^{\infty}} \| \partial_2 \tht \|_{L^{\infty}} + \| \partial_2 \tht \|_{L^{\infty}}^2,
\end{equation} we can conclude that if $T<\infty$ and $$\sup_{t \in [0,T]} \| \partial_2 \tht(t) \|_{L^{\infty}} < \infty,$$ then $T$ is not the maximal time of existence. Now, we assume that the maximal time $T$ is finite. Then, by the above argument, we have $\lim_{t \to T} \| \partial_2\tht(t) \|_{L^{\infty}} = \infty$. Suppose that there exist $\eta \in (0,1)$ and $C>0$ such that 
\begin{equation}\label{ass}
    \sup_{t \in [0,T)} (T-t)^{\eta} \| \partial_2 \tht(t) \|_{L^{\infty}} \leq C.
\end{equation} Then, from \eqref{eq:x1tht}, we have $$\frac{\ud}{\ud t} \| x_1^{1-\alpha} \partial_1 \theta \|_{L^{\infty}} \lesssim \| x_1^{1-\alpha} \partial_1 \theta \|_{L^{\infty}} (T-t)^{-\eta} + \| \tht \|_{L^{\infty}} (T-t)^{-\eta} + (T-t)^{-2\eta}.$$ 
By Gr\"onwall's inequality, we have $$\| x_1^{1-\alpha} \partial_1 \theta(t) \|_{L^{\infty}} \lesssim \| x_1^{1-\alpha} \partial_1 \theta_0 \|_{L^{\infty}} + (T-t)^{-2\eta+1} + \| \tht_0 \|_{L^{\infty}}.$$ Integrating over time, we obtain $$\int_0^T \| x_1^{1-\alpha} \partial_1\tht(t) \|_{L^{\infty}} < \infty.$$ Next, we recall \eqref{eq:2tht} $$\frac{\ud}{\ud t} \nrm{\rd_2\tht}_{L^\infty} \lesssim ( \nrm{ x_1^{1-\alpha} \rd_1\tht}_{L^\infty}+ \nrm{\partial_2\tht}_{L^{\infty}}) \nrm{ \rd_2\tht}_{L^\infty} .$$ Since $\| \partial_2 \tht(t) \|_{L^{\infty}} \in L^1(0,T)$ holds by \eqref{ass}, Gr\"onwall's inequality gives that $\sup_{t \in [0,T]} \| \partial_2 \tht(t) \|_{L^{\infty}} < \infty$, which contradics the assumption \eqref{ass}.

\section{Proof of illposedness}\label{sec:illposed}

\begin{proof}[Proof of Theorem \ref{thm:nonexist1}]
	We take the initial data $\tht_0$ satisfying the assumptions of Theorem \ref{thm:nonexist1}. Then by Theorem~\ref{thm_loc1}, we obtain a unique solution $\tht \in L^{\infty}([0,\delta];X^{1-\alpha})$ for some $\delta>0$. Note that $\delta$ can be replaced by a smaller one if we need. From the assumption that $\theta_0$ does not vanish on the boundary, we have a boundary point $x_0 = (0,a)$ such that $\tht_0(x_0) \ne 0$, $\rd_2\tht_0(x_0) \ne 0$. Without loss of generality, we assume that $\tht_0(x_0)=1$ and $\rd_2 \tht_0 (x_0) >0$. 
	
	\medskip
	
	\noindent \underline{Step 1: Lagrangian flow map.} We recall that the flow map $\Phi(t,\cdot) : \overline{\bbR^2_+} \to \bbR^2_+$ is well-defined for any $0\le t \le \dlt$ as the unique solution of \begin{equation*}
		\begin{split}
			\frac{\ud}{\ud t}\Phi(t,x) = u(t,\Phi(t,x)) , \qquad \Phi(0,x)=x. 
		\end{split}
	\end{equation*}        Due to the uniform boundedness of $u$, we can see $\supp \tht(t) \subset B(0;1)$ for all $t \in [0,\delta]$. For the estimate for $\Phi_1$, we recall \eqref{eq:Phi-1-est}. Let $x \in \supp \tht_0$ with $x_2 < a$. By Lemma~\ref{lem:vel} we have		\begin{equation*}
		\begin{aligned}
			\frac {\ud}{\ud t}(\Phi_2(t,x_0) - \Phi_2(t,x)) &= u_2(t,\Phi(t,x_0)) - u_2(t,\Phi(t,x)) \\
			&\leq U_2(t,\Phi(t,x_0)) - U_2(t,\Phi(t,x)) + C \| \tht \|_{X^{\alpha}} |\Phi(t,x_0) - \Phi(t,x)|.
		\end{aligned}
	\end{equation*}		Note that \begin{equation*}
		\begin{gathered}
			U_2(t,\Phi(t,x_0)) - U_2(t,\Phi(t,x)) \\
			= -\int^{\Phi_1(t,x)}_{0} \partial_1 U_2(t,\tau,\Phi_2(t,x_0)) \,\ud \tau + \int_{\Phi_2(t,x)}^{\Phi_2(t,x_0)} \partial_2 U_2(t,\Phi_1(t,x),\tau) \,\ud \tau.
		\end{gathered}
	\end{equation*}      Using \eqref{eq:U2-est} and \eqref{eq:U2-est-unbounded}, we can see \begin{equation*}
		\begin{gathered}
			-\int^{\Phi_1(t,x)}_{0} \partial_1 U_2(t,\tau,\Phi_2(t,x_0)) \,\ud \tau \\
			= -\int^{\Phi_1(t,x)}_{0} (\partial_1 U_2(t,\tau,\Phi_2(t,x_0)) - C_{\alpha}\tau^{-\alpha}\tht_0(x_0)) \,\ud \tau - \int_{0}^{\Phi_1(t,x)} C_{\alpha}\tau^{-\alpha}\tht_0(x_0) \,\ud \tau \\
			\leq C\| \partial_2 \tht \|_{L^{\infty}} |\Phi_1(t,x) - \Phi_1(t,x_0)| - \frac{C_{\alpha}}{1-\alpha} \tht_0(x_0) \Phi_1(t,x)^{1-\alpha}
		\end{gathered}
	\end{equation*}     and      \begin{equation*}
		\begin{gathered}
			\int_{\Phi_2(t,x)}^{\Phi_2(t,x_0)} |\partial_2 U_2(t,\Phi_1(t,x),\tau)| \,\ud \tau \leq C \| \partial_2 \tht \|_{L^{\infty}} |\Phi_2(t,x) -\Phi_2(t,x_0)|.
		\end{gathered}
	\end{equation*} Thus, we have \begin{equation*}
		\begin{gathered}
			\frac {\ud}{\ud t}(\Phi_2(t,x_0) - \Phi_2(t,x)) \leq - \varepsilon \tht_0(x_0) x_1^{1-\alpha} + C\| \tht \|_{X^{\alpha}} |\Phi(t,x_0) - \Phi(t,x)|
		\end{gathered}
	\end{equation*} for some constant $\varepsilon>0$ that only depending on $\alpha$. We assume that the quantity $\Phi_2(t,x_0) - \Phi_2(t,x)$ decreases over time. Indeed, this can be proved by showing that the right-hand side is negative along with the continuity argument. Then, it holds $\frac{1}{C}x_1^2 \leq |\Phi(t,x_0) - \Phi(t,x)|^2 \leq C(x_1^2 + (x_2-a)^2)$. Here, we take $x$ with $a - x_2 = \ell^{-(1-\gamma)}$ and $x_1 = \ell^{-1}$ for any given $\ell > 1$ and $\gamma \in (0,\alpha)$. Then, we have for sufficiently large $\ell$ that   \begin{equation*}
		\begin{split}
			\frac {\ud}{\ud t}(\Phi_2(t,x_0) - \Phi_2(t,x)) &\leq -\varepsilon \ell^{-(1-\alpha)} + C \| \tht \|_{X^{\alpha}} \ell^{-(1-\gamma)} \leq -\frac{\varepsilon}{2} \ell^{-(1-\alpha)}.
		\end{split}
	\end{equation*}		Integrating over times gives    \begin{equation*}
		\begin{split}
			\Phi_2(t_\ell,x_0) - \Phi_2(t_\ell,x) \leq \ell^{-(1-\gamma)} -\frac{\varepsilon}{2} \ell^{-(1-\alpha)} t_\ell \leq 0, \qquad t_\ell := \frac {2}{\varepsilon} \ell^{-(\alpha-\gamma)}.
		\end{split}
	\end{equation*}		Note that $t_\ell \to 0$ as $\ell \to \infty$. Thus, there exists $t^* \in (0,t_\ell] \subset(0,\delta]$ such that $\Phi_2(t^*,x_0) = \Phi_2(t^*,x)$.
	
	\medskip
	
	\noindent \underline{Step 2: Norm inflation.} Now, we are ready to finish the proof of Theorem~\ref{thm:nonexist1}. For any $\beta \in (1-\alpha,1]$, it is clear that     \begin{equation*}
		\begin{split}
			\frac {\tht(t,\Phi(t^*,x_0)) - \tht(t^*,\Phi(t,x))}{|\Phi(t^*,x_0) - \Phi(t^*,x)|^{\beta}} = \frac {\tht_0(x_0) - \tht_0(x)}{|x_0-x|} |x_0 - x|^{1-\beta} \left( \frac {|x_0-x|}{|\Phi_1(t^*,x)|} \right)^{\beta}.
		\end{split}
	\end{equation*}		From \eqref{eq:Phi-1-est}, we have $$\left( \frac {|x_0-x|}{|\Phi_1(t^*,x)|} \right)^{\beta} \geq \frac{1}{C} \ell^{\beta\gamma}.$$ Since $|x_0-x|^2 = \frac {1+\ell^{2\gamma}}{\ell^{2\gamma}}|a-x_2|^2 = x_1^2(1+\ell^{2\gamma})$, we can see $$\frac {\tht_0(x_0) - \tht_0(x)}{|x_0-x|} = \frac {\tht_0(0,a) - \tht_0(0,x_2)}{|a-x_2|} \left(\frac {\ell^{2\gamma}}{1+\ell^{2\gamma}}\right)^{\frac {1}{2}} + \frac {\tht_0(0,x_2) - \tht_0(x)}{|x_1|} \left(\frac {1}{1+\ell^{2\gamma}}\right)^{\frac {1}{2}}.$$ Combining the above gives $$\frac {\tht(t^*,\Phi(t,x_0)) - \tht(t,\Phi(t^*,x))}{|\Phi(t^*,x_0) - \Phi(t^*,x)|^{\beta}} \geq \frac{1}{C} \frac {\tht_0(0,a) - \tht_0(0,x_2)}{|a-x_2|} \left(\frac {\ell^{2\gamma}}{1+\ell^{2\gamma}}\right)^{\frac {1}{2}} \ell^{\beta\gamma-(1-\beta)(1-\gamma)}$$ Note that $\beta\gamma-(1-\beta)(1-\gamma) > 0$ for $\gamma > 1-\beta$, where $1-\beta < \alpha$. Thus, taking $\gamma \in (1-\beta,\alpha)$ and passing $\ell \to \infty$, we obtain $\sup_{t \in [0,\delta]}\| \tht(t) \|_{C^{\beta}} = \infty$ for any $\delta > 0$. This finishes the proof.  
\end{proof}

\begin{proof}[Proof of Theorem \ref{thm:nonexist2}]
	We take the initial data $\tht_0$ satisfying the assumptions of Theorem \ref{thm:nonexist1}. Then, we have a boundary point $x_0 = (0,a)$ such that $\tht_0(x_0) \ne 0$, $\rd_2\tht_0(x_0) \ne 0$. For simplicity, we assume that $\tht_0(x_0)=1$ and $\rd_2 \tht_0 (x_0) >0$. Suppose that there exists a solution $\tht \in L^{\infty}([0,\delta];C^{\alpha})$ with $\sup_{t \in [0,\delta]} \| \tht(t) \|_{C^{\alpha}} \leq M$ for some $\delta>0$ and $M>0$. Without loss of generality, we take $\delta $ small enough to satisfy $\delta M \ll 1$. In the following, we obtain a contradiction to this assumption.
	
	\medskip
	
	\noindent \underline{Step 1: Lagrangian flow map.} Applying Lemma~\ref{lem:vel2} or Remark~\ref{rmk:SQG}, we have a flow map $\Phi(t,\cdot) : \bbR^2_+ \to \bbR^2_+$ which is well-defined for any $0\le t \le \dlt$ as the unique solution of \begin{equation*}
		\begin{split}
			\frac{\ud}{\ud t}\Phi(t,x) = u(t,\Phi(t,x)) , \qquad \Phi(0,x)=x. 
		\end{split}
	\end{equation*}      Note that $\Phi$ is not defined on the boundary set when $\alpha = 1$. Due to the uniform boundedness of $u$, we can see $\supp \tht(t) \subset B(0;1)$ for all $t \in [0,\delta]$. In the following, we show \eqref{eq:Phi-2-est} only for $\alpha \neq 1$. One can obtain the same inequality for $\alpha=1$ by the use of Remark~\ref{rmk:SQG}. From Lemma~\ref{lem:vel2}, $u_1$ is uniformly Log-Lipschitz, so together with $u_1(t,(0,x_2))=0$, the first component of the flow map satisfies the estimate \begin{equation*}
		\begin{split}
			\left| \frac{\ud}{\ud t} \Phi_1(t,x) \right| \le CM \Phi_1(t,x) \log(\frac{1}{|\Phi_1(t,x)|}),
		\end{split}
	\end{equation*} Thus, \begin{equation}\label{eq:Phi-1-est2}
		\begin{split}
			x_1 \exp(\exp(-CMt)) \le	\Phi_1(t,x) \le x_1 \exp(\exp(CMt)).
		\end{split}
	\end{equation}     Let $x,x' \in \bbR^2_{+}$ with $x'_1 < x_1$. By Lemma~\ref{lem:vel2} we have	\begin{equation*}
		\begin{gathered}
			\frac {\ud}{\ud t}(\Phi_2(t,x') - \Phi_2(t,x)) = u_2(t,\Phi(t,x')) - u_2(t,\Phi(t,x)) \\
			\leq \tht(\Phi(t,x'))(f(\Phi(t,x')) - f(\Phi(t,x)) + CM|\Phi(t,x) - \Phi(t,x')| \log\left(\frac {1}{|\Phi(t,x) - \Phi(t,x')|}\right).
		\end{gathered}
	\end{equation*}		Note that \begin{equation*}
		\begin{gathered}
			\tht(\Phi(t,x'))(f(\Phi(t,x')) - f(\Phi(t,x)) \\
			= \tht_0(x')(f(\Phi(t,x')) - f(\Phi_1(t,x),\Phi_2(t,x'))) + \tht_0(x')(f(\Phi_1(t,x),\Phi_2(t,x')) - f(\Phi(t,x))).
		\end{gathered}
	\end{equation*} Since \eqref{eq:U2-est2}, \eqref{eq:U2-est-unbounded2}, and \eqref{eq:Phi-1-est2} imply \begin{equation*}
		\begin{split}
			\tht_0(x')(f(\Phi(t,x')) - f(\Phi_1(t,x),\Phi_2(t,x'))) &= -\tht_0(x') \int_{\Phi_1(t,x')}^{\Phi_1(t,x)} \partial_1 f(\tau,\Phi_2(t,x')) \,\ud \tau \\
			&\leq -\frac{1}{C} \tht_0(x') \left(x_1^{1-\alpha} - {x'_1}^{1-\alpha}\right)
		\end{split}
	\end{equation*}		and  \begin{equation*}
		\begin{split}
			\left| \tht_0(x')(f(\Phi_1(t,x),\Phi_2(t,x')) - f(\Phi(t,x))) \right| \leq C\tht_0(x')\left|\Phi_2(t,x) - \Phi_2(t,x')\right|,
		\end{split}
	\end{equation*}		it follows \begin{equation}\label{eq:Phi-2-est}
		\begin{gathered}
			\frac {\ud}{\ud t}(\Phi_2(t,x') - \Phi_2(t,x)) \\
			\leq -\frac{1}{C}\tht_0(x') \left(x_1^{1-\alpha} - {x'_1}^{1-\alpha}\right) + CM|\Phi(t,x) - \Phi(t,x')| \log\left( \frac {1}{|\Phi(t,x) - \Phi(t,x')|}\right).
		\end{gathered}
	\end{equation}
	Here, we consider $x$ and $x'$ with $x_1=\ell^{-1}$, $x'_1 = \ell^{-2}$, $a-x_2 = \ell^{-(1-\gamma)}$, and $a-x'_2 = \ell^{-(2-\gamma)}$ for given $\ell>1$. As in the proof of Theorem~\ref{thm:nonexist1}, we can assume that the quantity $\Phi_2(t,x') - \Phi_2(t,x)$ decreases over time. Then, it holds by \eqref{eq:Phi-1-est2} that $\frac{1}{C}x_1^2 \leq |\Phi(t,x_0) - \Phi(t,x)|^2 \leq C(x_1^2 + (x_2-x'_2)^2)$. For sufficiently large $\ell$, we can see   \begin{equation*}
		\begin{split}
			-\frac{1}{C}\tht_0(x') \left(\Phi_1(t,x)^{1-\alpha} - \Phi_1(t,x')^{1-\alpha}\right) \leq -\varepsilon \tht_0(x') \ell^{-(1-\alpha)}
		\end{split}
	\end{equation*}		 for some arbitrary constant $\varepsilon>0$. Thus, we obtain      \begin{equation*}
		\begin{split}
			\frac {\ud}{\ud t}(\Phi_2(t,x') - \Phi_2(t,x)) \leq -\varepsilon \tht_0(x') \ell^{-(1-\alpha)} + CM\ell^{-(1-\gamma)} \log \ell \leq -\frac{\varepsilon}{2} \ell^{-(1-\alpha)}
		\end{split}
	\end{equation*}		for sufficiently large $\ell$, if $\gamma \in (0,\alpha)$. It is used in the last inequality that $x' \to x_0$ as $\ell \to \infty$. Then, it follows    \begin{equation*}
		\begin{split}
			\Phi_2(t_\ell,x') - \Phi_2(t_\ell,x) \leq \ell^{-(1-\gamma)} - \ell^{-(2-\gamma)} - \frac{\varepsilon}{2} \ell^{-(1-\alpha)} t_\ell \leq 0, \qquad t_\ell := \frac {2}{\varepsilon} \ell^{-(\alpha-\gamma)}.
		\end{split}
	\end{equation*}		Note that $t_\ell \to 0$ as $\ell \to \infty$. Thus, there exists $t^* \in (0,t_\ell] \subset(0,\delta]$ such that $\Phi_2(t^*,x_0) = \Phi_2(t^*,x)$.
	
	\medskip
	
	\noindent \underline{Step 2: Norm inflation.} Now, we are ready to finish the proof of Theorem~\ref{thm:nonexist2}. For any $\beta \in (1-\alpha,1]$, we have from \eqref{eq:Phi-1-est2} that    \begin{equation*}
		\begin{split}
			\frac {\tht(t,\Phi(t^*,x')) - \tht(t^*,\Phi(t,x))}{|\Phi(t^*,x') - \Phi(t^*,x)|^{\beta}} &= \frac {\tht_0(x') - \tht_0(x)}{|x'-x|} |x' - x|^{1-\beta} \left( \frac {|x'-x|}{|\Phi_1(t^*,x')-\Phi_1(t^*,x)|} \right)^{\beta} \\
			&\geq \frac{1}{C} \frac {\tht_0(x') - \tht_0(x)}{|x'-x|} \ell^{\beta\gamma-(1-\beta)(1-\gamma)}.
		\end{split}
	\end{equation*}		It is not hard to show that $$\frac {\tht_0(x') - \tht_0(x)}{|x'-x|} \geq \frac{1}{2} \partial_2 \tht_0(x_0)$$ for suffieicently large $\ell$. Thus, if we take $\gamma \in (1-\beta,\alpha)$, then $\beta\gamma-(1-\beta)(1-\gamma)>0$ gives that $$\sup_{t \in (0,\delta]}\| \tht(t) \|_{C^{\alpha}} \geq \frac {\tht(t,\Phi(t^*,x')) - \tht(t^*,\Phi(t,x))}{|\Phi(t^*,x') - \Phi(t^*,x)|^{\beta}} > M$$ for some $\ell$. This finishes the proof. 
\end{proof}

\section{Proof of Lemma~\ref{lem:vel2}}
\begin{proof} Let $\bar{\tht}$ be the odd extension of $\tht$ in $\bbR^2$; that is, we set $\bar{\tht}(x_1,x_2)=-\tht(-x_1,x_2)$ for $x_1<0$ and $\bar{\tht}=\tht$ otherwise. Recalling the Biot--Savart law \eqref{eq:Biot-Savart-aSQG}, we have for $x\in \bbR^2_{+}$ that	\begin{equation*}
		\begin{split}
			u_1(x) &=  -\int_{\bbR^2} \frac{x_2-y_2}{|x-y|^{2+\alp}} \bar{\tht}(y) \, \ud y.
		\end{split}
	\end{equation*} 	For $|x| > 2$, the assumption $\supp \tht \subset B(0;1)$ gives $|\nabla^\ell u_1(x)| \leq C \| \tht \|_{L^{\infty}}$ for all $\ell \in \bbN$. Let $x,x' \in B(0;4)$ and $\tilde{\tht}$ be the even extension of $\tht$ in $\bbR^2$. Then, we have from $\supp \tht \subset \supp \varphi$ that 	\begin{equation*}
		\begin{gathered}
			u_1(x) - u_1(x') =  -\int_{\bbR^2} \left( \frac{x_2-y_2}{|x-y|^{2+\alp}} - \frac{x'_2-y_2}{|x'-y|^{2+\alp}} \right) \left(\tilde{\tht}(y) - \tht(x)\right) \varphi(y) \, \ud y \\
			-2 \int_{\bbR^2_{-}} \left( \frac{x_2-y_2}{|x-y|^{2+\alp}} - \frac{x'_2-y_2}{|x'-y|^{2+\alp}} \right) \bar{\tht}(y) \varphi(y) \, \ud y- \tht(x)\int_{\bbR^2} \left( \frac{x_2-y_2}{|x-y|^{2+\alp}} - \frac{x'_2-y_2}{|x'-y|^{2+\alp}} \right) \varphi(y) \, \ud y.
		\end{gathered}
	\end{equation*}		By the change of variables, the third integral is bounded by 		\begin{equation*}
		\begin{gathered}
			|\tht(x)| \int_{\bbR^2} \frac{|y_2|}{|y|^{2+\alp}} \left| \varphi(x-y) - \varphi(x'-y) \right| \, \ud y \leq C \| \tht \|_{L^{\infty}} |x-x'|.
		\end{gathered}
	\end{equation*}		We calculate the first integral dividing the region into $B(x;2|x-x'|)$ and $\bbR^2 \setminus B(x;2|x-x'|)$. In the first case, we have 	\begin{equation*}
		\begin{gathered}
			\left| \int_{B(x;2|x-x'|)} |x-y|^{\alpha}\left( \frac{x_2-y_2}{|x-y|^{2+\alp}} - \frac{x'_2-y_2}{|x'-y|^{2+\alp}} \right) \frac{\tilde{\tht}(y) - \tht(x)}{|x-y|^{\alpha}} \varphi(y) \, \ud y \right| \leq C |x-x'| \| \tht \|_{C^{0,\alpha}}.
		\end{gathered}
	\end{equation*} 	On the other hand, since it holds for $y \in \bbR^2 \setminus B(x;2|x-x'|)$ that \begin{equation*}
		\begin{split}
			\left| |y-x|^{\alpha}\left( \frac{x_2-y_2}{|x-y|^{2+\alp}} - \frac{x'_2-y_2}{|x'-y|^{2+\alp}} \right) \right| &= |y-x|^{\alpha} \left| \int_0^1 \partial_{\tau} \left(\frac{\tau (x_2-y_2) + (1-\tau)(x'_2-y_2)}{|\tau (x-y) + (1-\tau)(x'-y)|^{2+\alp}} \right) \,\ud \tau \right| \\
			&\leq C|x-x'| \frac {1}{|x-y|^{2}},
		\end{split}
	\end{equation*} 	it follows 	\begin{equation*}
		\begin{gathered}
			\left| \int_{\bbR^2 \setminus B(x;2|x-x'|)} |y-x|^{\alpha}\left( \frac{x_2-y_2}{|x-y|^{2+\alp}} - \frac{x'_2-y_2}{|x'-y|^{2+\alp}} \right) \frac{\bar{\tht}(y) - \bar{\tht}(x)}{|y-x|^{\alpha}} \varphi(y) \, \ud y \right| \\
			\leq C |x-x'| \log (10 + \frac{1}{|x-x'|}) \| \tht \|_{C^{0,\alpha}}.
		\end{gathered}
	\end{equation*} 	To estimate the remainder integral, we rewrite it as 	\begin{equation*}
		\begin{gathered}
			-2 \int_{\bbR^2_{-}} \left( \frac{x_2-y_2}{|x-y|^{2+\alp}} - \frac{x'_2-y_2}{|x'-y|^{2+\alp}} \right) \left(-\tht(-y_1,y_2) + \tht(x) \right) \varphi(y) \, \ud y \\
			+2 \tht(x)\int_{\bbR^2_{-}} \left( \frac{x_2-y_2}{|x-y|^{2+\alp}} - \frac{x'_2-y_2}{|x'-y|^{2+\alp}} \right) \varphi(y) \, \ud y.
		\end{gathered}
	\end{equation*}		We can treat the first integral as before to be bounded by $C |x-x'| \log (10 + \frac{1}{|x-x'|}) \| \tht \|_{C^{0,\alpha}}.$ On the other hand, we have by integration by parts that 	\begin{equation*}
		\begin{split}
			2 \tht(x)\int_{\bbR^2_{-}} \left( \frac{x_2-y_2}{|x-y|^{2+\alp}} - \frac{x'_2-y_2}{|x'-y|^{2+\alp}} \right) \varphi(y) \, \ud y &= \frac {2}{\alpha} \tht(x)\int_{\bbR^2_{-}} \left( \frac{1}{|x-y|^{\alp}} - \frac{1}{|x'-y|^{\alp}} \right) \partial_2 \varphi(y) \, \ud y \\
			&\leq C |x-x'| \| \tht \|_{L^{\infty}}.
		\end{split}
	\end{equation*}		Thus, we obtain \eqref{eq:u-logLip2} for $u_1$ by combining the above inequalities. Now, we estimate	\begin{equation*}
		\begin{split}
			u_2(x) &=  \int_{\bbR^2} \frac{x_1-y_1}{|x-y|^{2+\alp}} \bar{\tht}(y) \, \ud y.
		\end{split}
	\end{equation*}		Due to $\partial_1 u_1 = -\partial_2 u_2$, it holds $\| \partial_2 u_2 \|_{L^{\infty}} \leq C \| \tht \|_{C^{0,\alpha}}$. From $\supp \tht \subset \supp \varphi$, we have 	\begin{equation*}
		\begin{gathered}
			u_2(x) - u_2(x') =  \int_{\bbR^2} \left( \frac{x_1-y_1}{|x-y|^{2+\alp}} - \frac{x'_1-y_1}{|x'-y|^{2+\alp}} \right) \left(\tilde{\tht}(y) - \tht(x)\right) \varphi(y) \, \ud y \\
			+2 \int_{\bbR^2_{-}} \left( \frac{x_1-y_1}{|x-y|^{2+\alp}} - \frac{x'_1-y_1}{|x'-y|^{2+\alp}} \right) \bar{\tht}(y) \varphi(y) \, \ud y+ \tht(x)\int_{\bbR^2} \left( \frac{x_1-y_1}{|x-y|^{2+\alp}} - \frac{x'_1-y_1}{|x'-y|^{2+\alp}} \right) \varphi(y) \, \ud y.
		\end{gathered}
	\end{equation*}		It is clear that the first and third integral is bounded by $C|x-x'| \| \tht \|_{C^{0,\alpha}}$. We can see that the remainder term equals to 	\begin{equation*}
		\begin{gathered}
			2 \int_{\bbR^2_{-}} \left( \frac{x_1-y_1}{|x-y|^{2+\alp}} - \frac{x'_1-y_1}{|x'-y|^{2+\alp}} \right) \left(-\tht(-y_1,y_2) + \tht(x) \right) \varphi(y) \, \ud y \\
			-2 \tht(x)\int_{\bbR^2_{-}} \left( \frac{x_1-y_1}{|x-y|^{2+\alp}} - \frac{x'_1-y_1}{|x'-y|^{2+\alp}} \right) \varphi(y) \, \ud y.
		\end{gathered}
	\end{equation*}		Since we can estimate the first integral similarly, we skip it. Integration by parts gives 	\begin{equation*}
		\begin{gathered}
			-2 \tht(x)\int_{\bbR^2_{-}} \left( \frac{x_1-y_1}{|x-y|^{2+\alp}} - \frac{x'_1-y_1}{|x'-y|^{2+\alp}} \right) \varphi(y) \, \ud y \\
			= -\frac {2}{\alpha} \tht(x) \left( \int_{\bbR} \frac {1}{|x-(0,y_2)|^{\alpha}} \varphi(0,y_2) \,\ud y_2 - \int_{\bbR} \frac {1}{|x'-(0,y_2)|^{\alpha}} \varphi(0,y_2) \,\ud y_2 \right) \\
			-\frac {2}{\alpha} \tht(x)\int_{\bbR^2_{-}} \left( \frac{1}{|x-y|^{\alp}} - \frac{1}{|x'-y|^{\alp}} \right) \partial_1 \varphi(y) \, \ud y .
		\end{gathered}
	\end{equation*}        Since we can have by \eqref{eq:U2-def2} that 	\begin{equation*}
		\begin{gathered}
			\left| -2 \tht(x)\int_{\bbR^2_{-}} \left( \frac{x_1-y_1}{|x-y|^{2+\alp}} - \frac{x'_1-y_1}{|x'-y|^{2+\alp}} \right) \varphi(y) \, \ud y - \tht(x)(f(x) - f(x')) \right| \\
			\leq \left| -\frac {2}{\alpha} \tht(x)\int_{\bbR^2_{-}} \left( \frac{1}{|x-y|^{\alp}} - \frac{1}{|x'-y|^{\alp}} \right) \partial_1 \varphi(y) \, \ud y \right| \leq C |x-x'| \| \tht \|_{L^{\infty}},
		\end{gathered}
	\end{equation*}        \eqref{eq:u-logLip2} is obtained. Integration by parts gives \eqref{eq:U2-est2}. Note that			\begin{equation*}
		\begin{split}
			f(x) - f(x'_1,x_2) &= -\frac {2}{\alpha}\int_{\bbR} \left( \frac {1}{|x-(0,y_2)|^{\alpha}} - \frac {1}{|(x'_1,x_2)-(0,y_2)|^{\alpha}} \right)\varphi(0,y_2) \,\ud y_2 \\
			&= -\frac {2}{\alpha} \int_0^1 \int_{\bbR} \partial_{\tau} \left(\frac {1}{|(x'_1 + \tau (x_1-x'_1),x_2) - (0,y_2)|^{\alpha}} \right) \varphi(0,y_2) \,\ud y_2 \ud \tau \\
			&= 2\int_0^1 \int_{\bbR} \frac {(x_1-x'_1)(x'_1+\tau(x_1-x'_1))}{|(x'_1 + \tau (x_1-x'_1),x_2) - (0,y_2)|^{2+\alpha}} \varphi(0,y_2) \,\ud y_2 \ud \tau.
		\end{split}
	\end{equation*}		By the change of variables $y_2=x_2 + (x'_1 + \tau (x_1-x'_1))z$, we have	\begin{equation*}
		\begin{gathered}
			\frac{f(x) - f(x'_1,x_2)}{x_1-x'_1} \\
			= 2\int_0^1 \frac {1}{(x'_1 + \tau (x_1-x'_1))^{\alpha}} \int_{\bbR} \frac {1}{(1+z^2)^{1+\alpha/2}} \varphi(0,x_2 + (x'_1 + \tau (x_1-x'_1))z) \,\ud z \ud \tau.
		\end{gathered}
	\end{equation*}		Passing $x'_1$ to $x_1$, we obtain \eqref{eq:U2-est-unbounded2}. This finishes the proof. 
\end{proof}

\bibliographystyle{amsplain}

\end{document}